\documentclass[sn-mathphys,Numbered]{sn-jnl}


\def\nerve{\mathrm{N}}
\def\cspace{\mathrm{B}}
\def\cechnerve{\mathcal{N}}
\newcommand{\ob}{\operatorname{Ob}}
\newcommand{\mor}{\operatorname{Mor}}
\newcommand{\diag}{\operatorname{diag}}
\newcommand{\op}{^{\operatorname{op}}}

\def\from{:}
\newcommand{\catname}[1]{\mathbf{#1}}
\newcommand{\Set}{\catname{Set}}
\newcommand{\sSet}{\catname{sSet}}
\newcommand{\Cat}{\catname{Cat}}

\usepackage{tikz-cd} 
\usetikzlibrary{cd}

\usepackage{graphicx}%
\usepackage{float}
\usepackage{multirow}%
\usepackage{amsmath,amssymb,amsfonts}%
\usepackage{amsthm}%
\usepackage{mathrsfs}%
\usepackage[title]{appendix}%
\usepackage{xcolor}%
\usepackage{textcomp}%
\usepackage{manyfoot}%
\usepackage{booktabs}%
\usepackage{algorithm}%
\usepackage{algorithmicx}%
\usepackage{algpseudocode}%
\usepackage{listings}%



\theoremstyle{thmstyleone}%
\newtheorem{theorem}{Theorem}
\newtheorem{proposition}[theorem]{Proposition}%
\newtheorem{lemma}[theorem]{Lemma}
\newtheorem{corollary}[theorem]{Corollary}

\theoremstyle{thmstyletwo}%
\newtheorem{remark}{Remark}%

\theoremstyle{thmstylethree}%

\raggedbottom

\begin{document}

\title[Article Title]{The bifiltration of a relation and extended Dowker duality}


\author*[1]{\fnm{Melvin} \sur{Vaupel}}\email{melvin.vaupel@ntnu.no}

\author[1]{\fnm{Benjamin} \sur{Dunn}}\email{benjamin.dunn@ntnu.no}

\affil[1]{\orgdiv{Department of Mathematical Sciences}, \orgname{Norwegian University of Science and Technology}, \orgaddress{\street{Alfred Getz' vei 1}, \postcode{7036}, \city{Trondheim}, \country{Norway}}}


\abstract{We explain how homotopical information of two composeable relations can be organized in two simplicial categories that augment the relations row and column complexes. We show that both of these categories realize to weakly equivalent spaces, thereby extending Dowker's duality theorem. We also prove a functorial version of this result. Specializing the above construction a bifiltration of Dowker complexes that coherently incorporates the total weights of a relation's row and column complex into one single object is introduced. This construction is motivated by challenges in data analysis that necessitate the simultaneous study of a data matrix rows and columns. To illustrate the applicability of our constructions for solving those challenges we give an appropriate reconstruction result.}

\keywords{Dowker complexes, Simplicial categories, Applied topology, Data analysis}



\maketitle

\section{Introduction}
Data often comes in the form of a matrix. For example can the elements of a pointcloud~$X \subseteq \mathbb{R}^n$ be arranged as column vectors. In studying such data sets one is early on faces a decision: \textit{should I study the matrix's colums or it's rows}? Scientists in different areas have learned through experience which combinations of data and question lend themselves to which modes of analysis. On the other hand, studying a data matrix's rows and columns may often provide complementary information and it might all too often make most sense to use all of it. In doing so one is however confronted with a problem of \textit{coherence}. How does information obtained from the rows of a data matrix fit together with information obtained from it's columns? We wish to study this problem through the lens of filtrations on \textit{Dowker complexes} of a relation~$A : I \times J \rightarrow \{0,1\}$, i.e a binary matrix. \\ 
In \cite{dowker1952homology} Dowker first explained how to associate two simplicial complexes to such a relation.
\begin{enumerate}
    \item The \textit{row complex}~$R(A)$ with simplices the collections~$\sigma \subseteq I$ such that there is some~$j \in J$ where $A(i,j)=1$ for all~$i \in \sigma$.
    \item The \textit{column complex}~$C(A)$ with simplices the collections~$\tau \subseteq J$ such that there is some~$i \in I$ where $A(i,j)=1$ for all~$j \in \tau$.
\end{enumerate}
He then proved that these two complexes always have the same simplicial homology and cohomology. His result, today known as \textit{Dowker duality} was later strengthened to a homotopy equivalence between the row and column complexes realizations by Björner \cite{bjorner1995topological}. In recent years interest in Dowker complexes surged when their applicabilty to problems in applied topology became apparent. Chowdhury and Mémoli gave a functorial version of Dowker duality \cite{chowdhury2018functorial} and Virk generalised their result, while establishing a connection to the functorial nerve lemma and Vietoris Rips filtrations \cite{Virk}. In \cite{robinson2022cosheaf} Robinson explained how to augment Dowker complexes into cosheaves, that with their costalks track witnesses to respective simplices. 
The present paper relates to both Virks and Robinsons lines of work and is based on two motivations. The first is more theoretical, while the second is more applied. Both however start with the nerve lemma and it's connection with Dowker complexes. \\
The nerve lemma tells us that we can reconstruct the homotopy type of a space~$X$ from the \v{C}ech nerve~$\cechnerve \mathcal{U}$ of a good open cover~$\mathcal{U}=\{U_i \subseteq X\}_{i\in I}$. In a way this result is not surprising. The condition that~$\mathcal{U}$ is good means that all intersections of coverelements are either contractible or empty i.e that the homtopical content of~$X$ is concentrated in the combinatorics of how cover elements overlap. These combinatorics are precisely what is encoded in the nerve~$\cechnerve \mathcal{U}$, which has as simplices the collections of cover elements with non-empty overlap. If we drop the assumption of~$\mathcal{U}$ being good, then the nerve lemma does not apply anymore. In order to recover the homotopy type of~$X$ from such a cover in general we have to keep track of the, now possibly non-contractible, homotopical content of the overlaps. One way to do this is by forming a topological category
\[
    X_{\mathcal{U}} = \left[
\begin{tikzcd}
 \bigsqcup_{\sigma \subseteq I} U_{\sigma} & \bigsqcup_{\sigma' \subseteq \sigma} U_{\sigma} \arrow[l,shift left,"\operatorname*{id}"] \arrow[l,shift right,"\iota"']
\end{tikzcd}
\right],
\]
where we denote by $U_{\sigma}$ the intersection~$\bigcap_{i \in \sigma} U_{i}$. The source and target maps in this category are given by identity and inclusion in the sense that a morphism~$x \in U_{\sigma}$ in the component indexed by~$\sigma' \subseteq \sigma$ is mapped to~$x \in U_{\sigma}$ by the source map~$\iota$ and to~$x \in U_{\sigma'}$ by the target map~$\operatorname*{id}$. 
In \cite{Segal} Segal showed that the classifying space of~$X_{\mathcal{U}}$ recovers the homotopy type of the base space if~$\mathcal{U}$ admits a subordinate partition of unity. Later Dugger and Isaksen generalized this result to all open covers \cite{Dugger}. \\
We may link this theory of open covers to that of relations and Dowker complexes by defining for an open cover~$\mathcal{U}=\{U_i \subseteq X\}_{i \in I}$ the relation
\begin{align*}
    & A_{\mathcal{U}} \from I \times X \to \{0,1\} \\
    & A_{\mathcal{U}} = \begin{cases}1 \text{ if } x \in U_i \\
    0 \text{ else} \end{cases}.
\end{align*}
The row complex of this relation is precisely the \v{C}ech nerve of the cover~$\cechnerve{\mathcal{U}}$, with simplices the non-empty intersections of coverelements~$\{\sigma \subseteq I \mid \bigcap_{i \in \sigma} U_{\sigma} \neq \emptyset\}$, while the column complex is the covers \textit{Vietoris complex}. This setting was one of Dowkers original motivations and allowed him to specialize his duality result to an isomorphism between the \v{C}ech and Vietoris (co-)homology groups of a cover. \\
We will now turn towards our two motivations, starting with the more theoretical one. 
\paragraph*{Motivation 1} Dowkers original proof of the nerve lemma relies on carefully constructing appropriate subdivisions and then using that contiguous simplicial maps are homotopic upon realisation. Björner proof from \cite{bjorner1995topological}, as illustrated in Figure \ref{Fig:DowkerProof}, is much shorter and goes as follows. 
\begin{figure}[H]
    \centering    \includegraphics[width=0.6\textwidth,height=0.6\textheight,keepaspectratio]{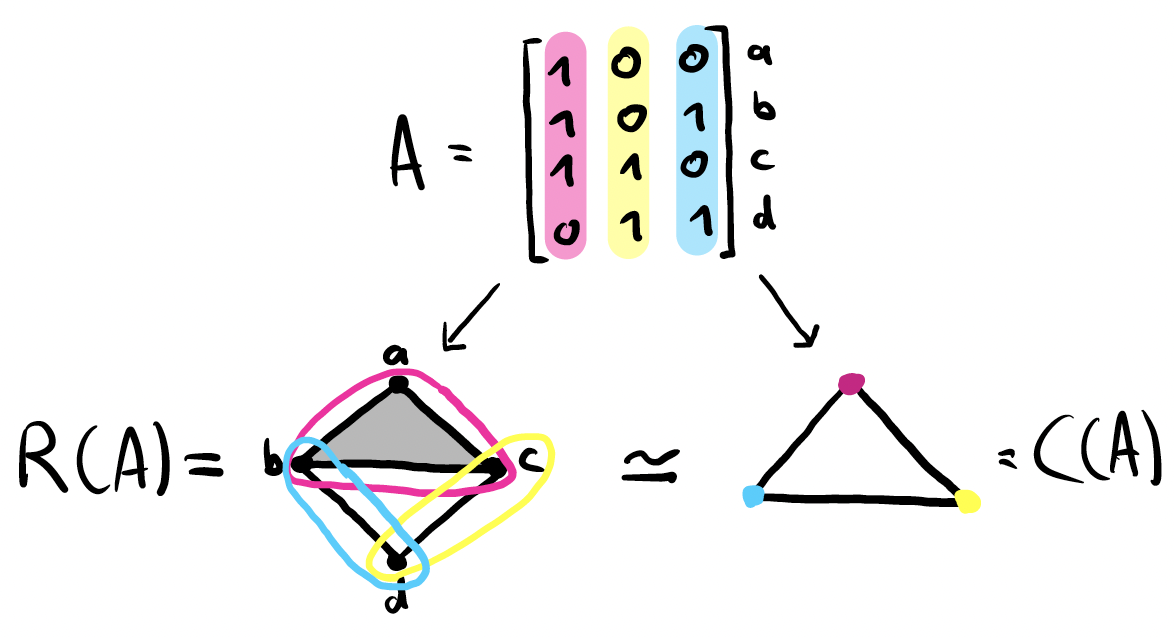}
    \label{Fig:DowkerProof}
    \caption{Construct a good open cover~$\mathcal{U}$ of the row complex~$|R(A)|$, such that the \v{C}ech nerve~$\cechnerve \mathcal{U}$ is precisely the column complex~$C(A)$. By an application of the nerve lemma we get~$|R(A)| \simeq |\cechnerve \mathcal{U}| = |C(A)|$.}
\end{figure}
Given a relation~$A \from I \times J \to \{0,1\}$ one may cover the realization~$|R(A)|$ with opens~$U_j$, obtained for every~$j \in J$ as slight thickenings of the maximal simplex~$\{i \in I \mid A(i,j)=1\}$. The nerve of this good cover is clearly equal to the relations column complex~$C(A)$ and an application of the nerve lemma thus finishes the proof. This bears the question if we can use the enriched version of the nerve lemma~$\cspace X_{\mathcal{U}} \simeq X$ as proposed by Segal, Dugger and Isaksen and prove an appropriately enriched version of Dowker duality. Specifically we study two \textit{composeable} relations~$A \from I \times J \to \{0,1\}$ and~$B \from J \times K \to \{0,1\}$. We may then augment every simplex~$\sigma$ in the row complex of~$A$ with that part of the row complex of~$B$, corresponding to~$\sigma$'s witnesses. Explicitly we may form the topological category 
\[
    \catname{R}(A,B) =
    \left[
\begin{tikzcd}
 \bigsqcup_{\sigma \in R(A)} |R(B|_{J_{\sigma}\times K})| & \bigsqcup_{\sigma_1 \subseteq \sigma_0} |R(B|_{J_{\sigma_0}\times K})| \arrow[l,shift left,"\operatorname*{id}"] \arrow[l,shift right,"\iota"']  \arrow[l,shift right]
\end{tikzcd}\right],
\]
where~$J_{\sigma}=\{j \in J|A(i,j)=1\}$ and as for~$X_{\mathcal{U}}$ above, the source and target maps are constructed from inclusion and identity maps. Similarly we may augment simplices in the column complex of~$B$ with bits of the column complex of~$A$ and form~$\catname{C}(A,B)$. An enriched version of Dowker duality would then amount to a weak equivalence between the respective classifying spaces~$\operatorname{B}\catname{R}(A,B) \simeq \operatorname{B}\catname{C}(B,A)$. A sketch of our proof for such a result is shown in Figure \ref{Fig:ExtDowkerProof}.
\begin{figure}
    \centering    \includegraphics[width=0.8\textwidth,height=0.8\textheight,keepaspectratio]{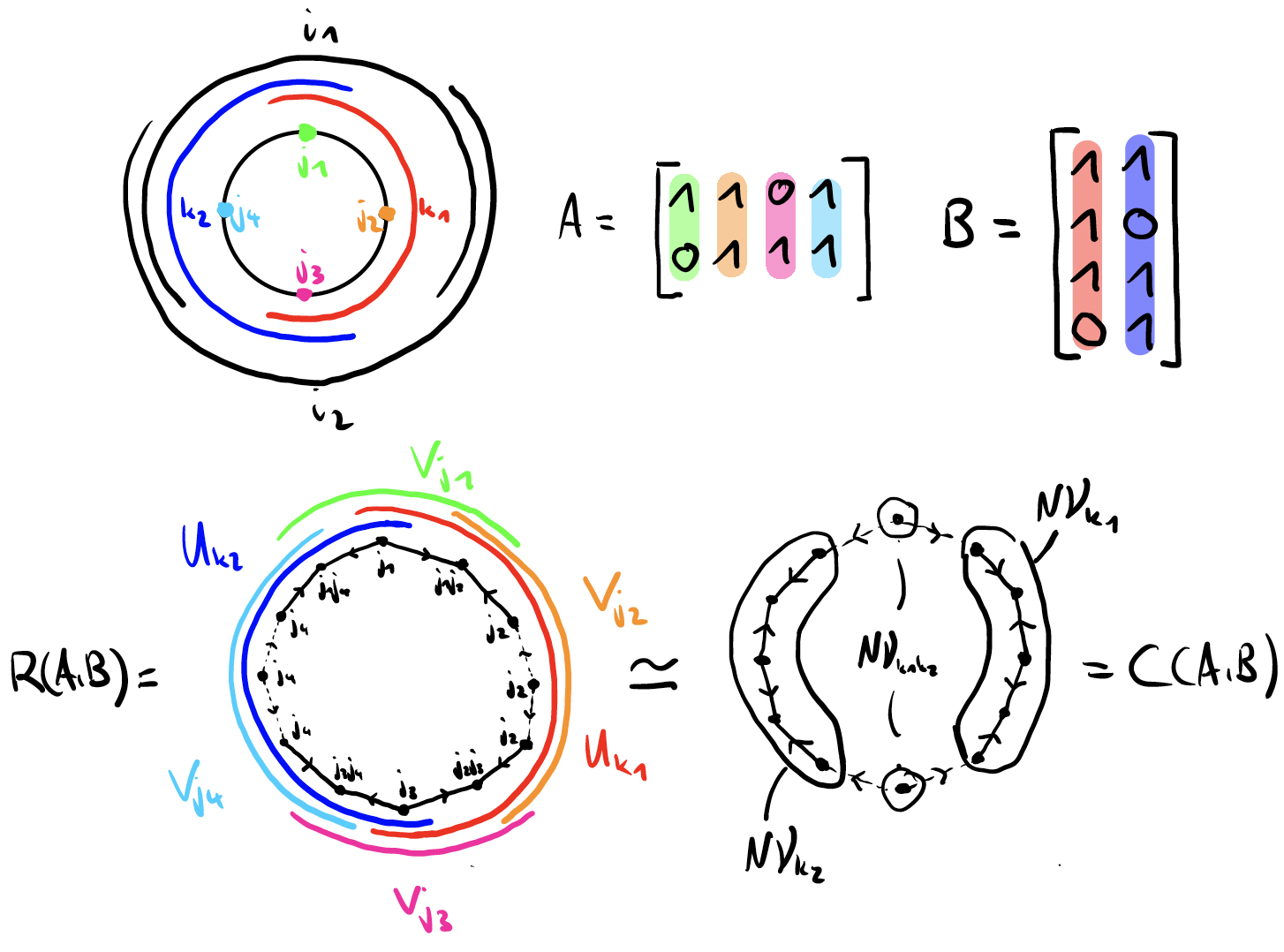}
    \label{Fig:ExtDowkerProof}
    \caption{Given two relations~$A$ and~$B$ of compatible dimensions we construct topological categories~$R(A,B)$ and~$C(A,B)$ incorporating information of the respective row and column complexes into one another. Constructing an appropriate cover~$\mathcal{U}$ of~$|R(A,B)|$ and then a good refinement of every overlap~$U_{\rho}$ in~$\mathcal{U}$ in terms of another cover~$\mathcal{V}$ we are able to reconstruct the homotopy type of~$|C(A,B)|$ and prove ~$|R(A,B| \simeq \operatorname{B}|R(A,B)|_{\mathcal{U}} \simeq |C(A,B)|$.}
\end{figure}

\paragraph*{Motivation 2} 
Experiments in neuroscience often produce datasets where the activity of~$N$ neurons is recorded during~$T$ timebins. This information may be represented as a binary matrix~$A$, where~$A_{i,j}=1$ if the~$i$'th neuron is active in the~$j$'th timebin and~$A_{i,j}=0$ if it is inactive. Our experience with these datasets shows that for certain collections of neurons it is possible to identify a covariate space~$X$ such that individual neurons have highly elevated activity levels in spatially constrained regions of~$X$. These regions are called the neurons \textit{receptive fields}. The space~$X$ might be something with an immediate analog in the real world like, for example, head direction or the position in a 2D-plane. If the covariate space~$X$ is unknown it might be of interest to infer it's homotopical invariants directly from the datamatrix~$A$. To do so, we may interpret the neurons receptive fields as elements of a good open cover~$\mathcal{U}$ of~$X$, while timebins corrrespond to points of~$X$. The matrix~$A$ is then of the type~$A_{\mathcal{U}}$ and we may infer the covers \v{C}ech nerve as it's row complex.
\begin{figure}
    \centering    \includegraphics[width=0.8\textwidth,height=0.8\textheight,keepaspectratio]{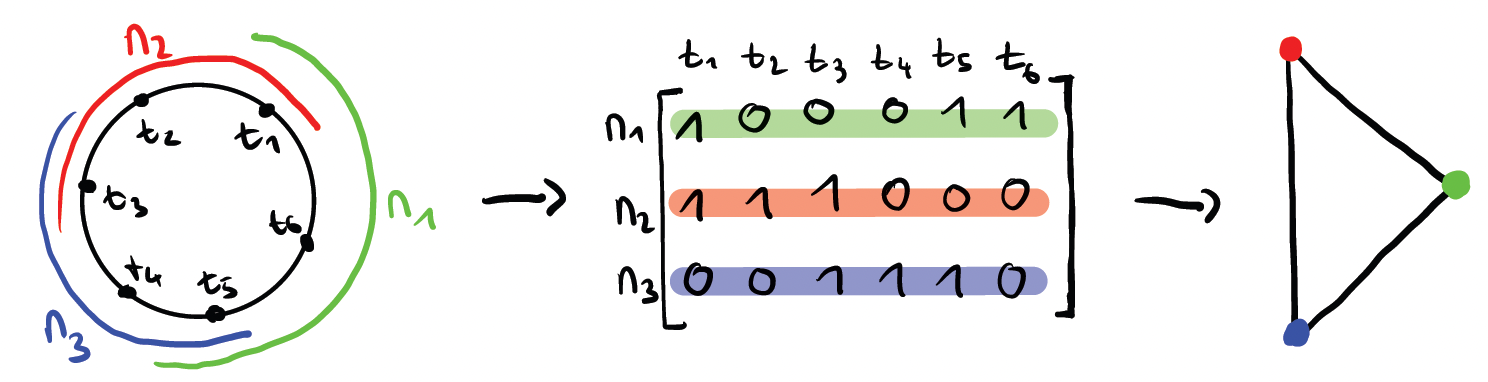}
    \label{Fig:Neuro}
    \caption{The activity of three neurons~$n_1$, $n_2$ and $n_3$ is recorded during six time points~$t_1,t_2,\ldots$ giving us a relation. The neurons receptive fields form a good cover of the circular covariate space. If the timepoints constitute a sufficient sample of that space we may compute the covers nerve as the relation's row complex.}
\end{figure}
This method for unsupervised inferrence of neural covariate spaces was pioneered by Curto, Itzkov \cite{curto2008cell}, Singh et al. \cite{singh2008topological} and others. It has since then been successfully used for example in the analysis of data recorded from head direction cells \cite{rybakken2019decoding} and grid cells \cite{gardner2022toroidal}. Such applications however don't come without challenges. While receptive fields are often convex, we may also encounter situations where they are for example multi-peaked, which would lead to a non-good cover and thus a false inference as the nerve lemma does not apply anymore. 
We may weight simplices in Dowker complexes by their number of witnesses. This is called the \textit{total weight} in \cite{robinson2022cosheaf}. To deal with receptive fields that don't form a good cover we study the so obtained filtration of column complexes 
\begin{equation*}
    \ldots \hookrightarrow C_l(A) \hookrightarrow C_{l+1}(A) \hookrightarrow \ldots
\end{equation*}
and often recover the correct homotopy type of~$X$ over a significant range of parameters. The reasons is that homotopical content of non-contractible overlaps is recovered through other receptive fields. Note that a similar weighting on the row complex does not achieve the same objective. The filtration 
\begin{equation*}
    \ldots \hookrightarrow R_k(A) \hookrightarrow R_{k+1}(A) \hookrightarrow \ldots
\end{equation*}
is however useful to deal with another potential challenge. A neural recording may contains multiple groups of neurons, with receptive fields on different spaces~$X_1,X_2,\ldots$. Then a timepoint would not correspond to a single point in one of them, but rather to a tuple~$x_1,x_2,\ldots$ i.e a point in the product space~$X_1 \times X_2\times \ldots$. The filtration of row complexes may disentangle them in a significant range of parameters and recover the homotopy type of their union~$X_1 \cup X_2 \cup \ldots$ instead. Summarizing the above, the row complex~$R(A)$ and the column complex~$C(A)$ are always homotopy equivalent but their filtrations in terms of total weights may contain complementary information. Both pieces of information can be instrumental in real world data analysis problems. We apply them to aforementioned challenges of neural data analysis in \cite{TDAneuro}. In this paper we propose a construction that coherently combines them in one single bifiltration. We hope our construction to be useful in the unsupervised inference of neural covariate spaces from datasets with multiple neural modules and potentially non-convex receptive fields. It may also be applicable to other data analysis scenarios as for example the study of gene expression profiles.  
\begin{figure}
    \centering    \includegraphics[width=0.8\textwidth,height=0.8\textheight,keepaspectratio]{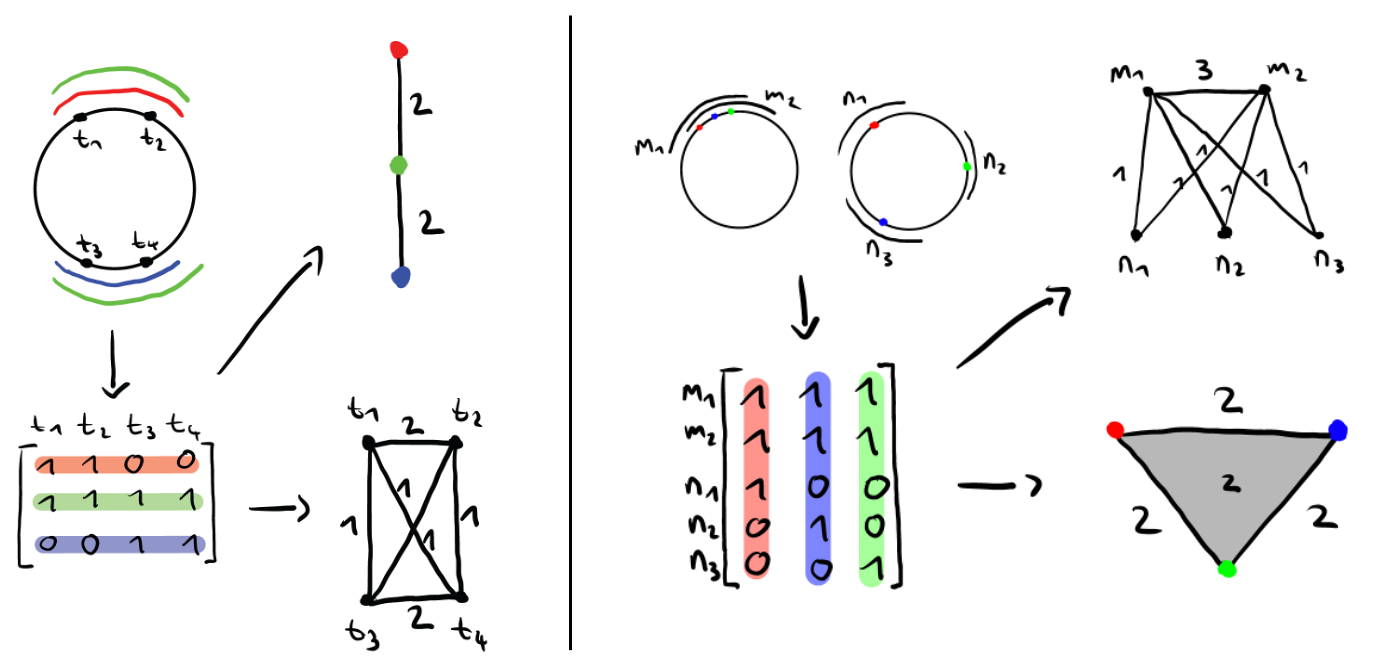}
    \label{Fig:Application}
    \caption{\textbf{Left}: For a relation~$A_{\mathcal{U}}$ coming from a non-good cover~$\mathcal{U}$ we may resolve non-contractible overlaps by considering total weights on the column complex. This does not work with the row complex. \textbf{Right}: On the other hand we may disentangle simultaneously sampled spaces through total weights on the row complex but not with the column complex.}
\end{figure}

\paragraph*{Contributions and outline} 
For formalizing our results we found it helpful to work with poset categories of the column and row complexes and their categorical nerves. A major advantage is that we can use the powerful machinery of combinatorial homotopy theory and don't have to realize for making statements about homotopy equivalences. This for example spares us the construction of thickenings and a discussion of their compatibility's in the proof of Proposition \ref{Proposition:ExtendedDowker}. On the other hand we don't sacrifice generality as we can easily translate our results back to topological spaces by applying a realization functor (see Remark \ref{remark:ssettop}). \\
In Section \ref{section:NervesSimpCat} we review some notions from combinatorial homotopy theory. First we look into some results about nerves of categories before we transition to an enriched setting of simplicial categories and associated bisimplicial sets. An emphasis is put on results that allow us to make homotopical statements about nerves of categories and their maps, based of categorical observations. Subsequent sections will make extensive use of those techniques. The reader comfortable with combinatorial homotopy theory may skip this section.\\
The purpose of section \ref{section:simpsegal} is to prove Theorem \ref{theorem:functorialsegal}. This is an enriched nerve lemma comparable to that of Segal, Dugger and Isaksen but formulated for appropriately defined covers of simplicial sets. This result is not surprising and may very well be implicit in the existing literature on the topic. We couldn't however find it in this precise form and it forms an important stepping stone for the subsequent sections.   
In Section \ref{section:ExtendedDowker} we start with two relations as in Motivation 1 and combine them to construct simplicial categories~$\catname{R}^{AB}$ and~$\catname{C}^{AB}$. These have classifying spaces weakly equivalent to those of the topological categories~$\catname{R}(A,B)$ and~$\catname{C}(A,B)$ from above. We can use our enriched nerve lemma to prove the following extension of Dowker's duality theorem. 
\begin{theorem} \label{Theorem:main}
    Given two relations~$A\from I \times J \to \{0,1\}$ and~$B\from J \times K \to \{0,1\}$ there is a zigzag of weak equivalences 
    \[
    \cspace \catname{R}(A,B) \simeq \cspace \catname{C}(A,B),
    \]
    natural with respect to morphisms of relations. 
\end{theorem}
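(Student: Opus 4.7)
The plan is to mimic Björner's short proof of ordinary Dowker duality, but with the enriched nerve lemma (Theorem \ref{theorem:functorialsegal}) replacing the classical one, and to carry out the whole argument combinatorially with the simplicial categories $\catname{R}^{AB}$ and $\catname{C}^{AB}$ of Section \ref{section:ExtendedDowker}. Since these have classifying spaces weakly equivalent to $\cspace\catname{R}(A,B)$ and $\cspace\catname{C}(A,B)$ respectively, it suffices to produce a natural zigzag of weak equivalences $\cspace\catname{R}^{AB}\simeq\cspace\catname{C}^{AB}$.

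The first step is to build a cover $\mathcal{U}=\{U_k\}_{k\in K}$ of $\nerve\catname{R}^{AB}$ whose combinatorics recover the column side. I would let $U_k$ be the sub-simplicial-set consisting of those simplices of $\nerve\catname{R}^{AB}$ whose $B$-witness satisfies $B(\,\cdot\,,k)=1$; equivalently, $U_k$ contains an indexed simplex $(\sigma,\tau)$ precisely when $B(j,k)=1$ for all $j\in\tau$. An intersection $U_\rho=\bigcap_{k\in\rho}U_k$ is then non-empty exactly when $\rho$ is a simplex of the column complex $C(B)$, and $U_\rho$ is naturally identified with a restricted row-complex $R(A|_{I\times J_\rho})$, where $J_\rho=\{j\in J\mid B(j,k)=1\,\forall k\in\rho\}$. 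An application of Theorem \ref{theorem:functorialsegal} to $\mathcal{U}$ then yields a weak equivalence
\[
\cspace\catname{R}^{AB}\;\simeq\;\cspace\bigl(\catname{R}^{AB}\bigr)_{\mathcal{U}},
\]
natural in the input relations.

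The cover $\mathcal{U}$ is not good, because each $U_\rho\simeq|R(A|_{I\times J_\rho})|$ is still non-trivial. Here I would apply Björner's idea a second time: for each $\rho\in C(B)$, cover $U_\rho$ by the family $\mathcal{V}^\rho=\{V^\rho_i\}_{i\in I}$, where $V^\rho_i$ is a thickening of the maximal simplex $\{j\in J_\rho\mid A(i,j)=1\}$. The cover $\mathcal{V}^\rho$ is good and its nerve is the column complex $C(A|_{I\times J_\rho})$. A second application of the enriched nerve lemma, together with the homotopy invariance under overlap-wise weak equivalence established in Section \ref{section:simpsegal}, replaces each overlap $U_\rho$ in $(\catname{R}^{AB})_{\mathcal{U}}$ by $|C(A|_{I\times J_\rho})|$, assembling the two layers into the classifying space of a simplicial category that one checks is precisely $\catname{C}^{AB}$.

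Naturality in morphisms of relations is inherited from the functorial form of Theorem \ref{theorem:functorialsegal} once both covers $\mathcal{U}$ and $\{\mathcal{V}^\rho\}_\rho$ are constructed functorially; indeed a morphism of relations induces morphisms of both covers that commute with restriction to witnesses. The main obstacle I anticipate is the final identification step: showing that the two-stage homotopy colimit built from $\mathcal{U}$ and $\{\mathcal{V}^\rho\}_\rho$ reorganises into the homotopy colimit presenting $\cspace\catname{C}^{AB}$. This is a Fubini-style interchange problem, and will likely require assembling a bisimplicial category indexed jointly by $C(B)$ and $C(A|_{I\times J_\rho})$ and verifying, via a Theorem-A or Reedy-style argument using the machinery of Section \ref{section:NervesSimpCat}, that its two projections collapse onto the two sides of the claimed equivalence.
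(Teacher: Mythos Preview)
Your outline is close to the paper's argument, but there are two points worth flagging.

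First, an indexing slip. Your second cover $\mathcal{V}^\rho=\{V^\rho_i\}_{i\in I}$ is indexed by $I$, yet you claim its nerve is $C(A|_{I\times J_\rho})$, whose vertex set lies in $J_\rho$. The nerve of a cover indexed by $I$ has vertices in $I$, so as written this cannot hold. What you want is the Bj\"orner cover of $R(A|_{I\times J_\rho})$ indexed by $j\in J_\rho$, with $V^\rho_j$ the (simplicial) star of the maximal simplex $I^A_j\cap I$; its nerve is then $C(A|_{I\times J_\rho})=\catname{C}^A_{J_\rho}$, which is exactly the fibre of $\catname{C}^{AB}$ over $\rho$.

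Second, and more substantively, the paper sidesteps the Fubini/interchange obstacle you anticipate. Rather than covering each overlap $U_\rho$ separately and then reassembling, the paper defines \emph{both} covers on the whole space $R^{AB}$: one $\mathcal{U}=\{U_k\}_{k\in K}$ (your first cover) and a second $\mathcal{V}=\{V_j\}_{j\in J}$ obtained by restricting the base direction to $\catname{R}^A_{I^A_j}$. It then invokes Lemma~\ref{lemma:TwoCovers}: if every mixed intersection $U_\rho\cap V_\tau$ is empty or contractible, then $R^{AB}\simeq \diag(\nerve\circ\catname{U_V})$ in one step. A direct computation shows $U_\rho\cap V_\tau$ is nonempty precisely when $\tau\in\ob\catname{C}^A_{J^B_\rho}$, and in that case contractible (terminal objects in both the base and fibre categories), so $\catname{U_V}=\catname{C}^{AB}$ on the nose. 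This packages your two nested applications of the enriched nerve lemma into a single one and makes the identification with $\catname{C}^{AB}$ immediate, with naturality following from Lemma~\ref{lemma:FunctorialTwoCovers}.
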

Finally, we construct in Section \ref{section:BifiltDow} for a given relation~$A$, a bifiltration that combines the total weights on the row the column complex of~$A$ in one single object. Explicitly we construct a parameterized family of relations~$A_{kl}$, that induces a bifiltration of Dowker complexes 
\begin{equation*}
        \begin{tikzcd}
            & \vdots \arrow[d,hookrightarrow] & \vdots \arrow[d,hookrightarrow] & \\
             \cdots \arrow[r,hookrightarrow] & R(A_{k,l}) \arrow[r,hookrightarrow] \arrow[d,hookrightarrow] & R(A_{k',l}) \arrow[d,hookrightarrow] \arrow[r,hookrightarrow] & \cdots \\
            \cdots \arrow[r,hookrightarrow] & R(A_{k,l'}) \arrow[r,hookrightarrow] \arrow[d,hookrightarrow] & R(A_{k',l'}) \arrow[d,hookrightarrow] \arrow[r,hookrightarrow] & \cdots \\
            & \vdots & \vdots & 
        \end{tikzcd}
    \end{equation*} 
such that the~$(k=1)$-column recovers the filtration
\begin{equation*}
    \ldots \hookrightarrow C_l(A) \hookrightarrow C_{l+1}(A) \hookrightarrow \ldots
\end{equation*}
and the $(l=1)$-row recovers the filtration 
\begin{equation*}
    \ldots \hookrightarrow R_k(A) \hookrightarrow R_{k+1}(A) \hookrightarrow \ldots
\end{equation*}
up to natural weak equivalence. With an eye towards our motivation from neural data analysis (Motivation 2) we prove the following reconstruction result. 
\begin{theorem}
    Let~$1 \leq m \leq n$ and~$1 \leq p \leq q$. Let~$\mathcal{U} = \{U_i \subseteq X\}_{i \in I}$ be an~$n$-fold open cover and~$\mathcal{V} = \{V_j \subseteq X\}_{j \subseteq J}$ a collection of subsets of the space~$X$ such that
    \begin{enumerate}
        \item For every~$\sigma \subseteq I$ with~$\# \sigma \geq m$ the intersection~$U_{\sigma}$ is empty or contractible
        \item For every~$\sigma \subseteq I$ we have~$\#\{j \in J|U_{\sigma} \cap V_j\}\leq p$ if $U_{\sigma} = \emptyset$ and~$\#\{j \in J|U_{\sigma} \cap V_j\}\geq q$ if~$U_{\sigma} \neq \emptyset$.
    \end{enumerate}
    Then with $A \from I \times J \to \{0,1\}$ a relation where~$A(i,j)=1$ if~$U_i \cap V_j \neq \emptyset$ and zero otherwise we have for~$p \leq k \leq q$ and~$m \leq l \leq n$:
    \[
        |R(A_{k,l})| \simeq X
    \]
\end{theorem}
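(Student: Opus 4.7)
The plan is to reduce the theorem to an application of the enriched nerve lemma (Theorem~\ref{theorem:functorialsegal}) to the cover~$\mathcal{U}$. Hypothesis~2 will be used to pin down the combinatorics of $R(A_{k,l})$ in cover-theoretic terms, and Hypothesis~1 will supply the local contractibility that the nerve lemma requires.

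First I would interpret $R(A_{k,l})$ in terms of $\mathcal{U}$ and $\mathcal{V}$. By the bifiltration of Section~\ref{section:BifiltDow}, the parameter $k$ acts as a row-weight threshold on $R(A)$ (since the $(l=1)$-row recovers $R_k(A)$) and the parameter $l$ as a Dowker-dual column-weight threshold (since the $(k=1)$-column recovers $C_l(A)$). Under $A(i,j)=1 \iff U_i \cap V_j \neq \emptyset$, the row-weight of a simplex $\sigma \subseteq I$ is the number of $V_j$ meeting every $U_i$ with $i \in \sigma$, which is precisely the quantity bounded in Hypothesis~2. Therefore for $p \leq k \leq q$ the row parameter separates the simplices sharply: those with $U_\sigma = \emptyset$ have at most $p < k$ witnesses and are excluded, while those with $U_\sigma \neq \emptyset$ have at least $q \geq k$ witnesses and are retained, so the surviving simplices form the nerve $\nerve\mathcal{U}$. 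Passing through the zigzag of Theorem~\ref{Theorem:main}, the column parameter $l$ in the range $m \leq l \leq n$ should translate to a restriction to overlaps $U_\sigma$ with $\#\sigma \geq m$ (bounded above by $n$ via the $n$-foldness of $\mathcal{U}$). By Hypothesis~1 these overlaps are contractible when non-empty, and applying Theorem~\ref{theorem:functorialsegal} to $\mathcal{U}$ on the retained simplices concludes that $|R(A_{k,l})| \simeq X$.

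The main obstacle, I expect, is making the above identification rigorous: $R(A_{k,l})$ is not literally a sub-complex of $\nerve\mathcal{U}$ but is only weakly equivalent to one via the simplicial-categorical construction, so the sharp separation by Hypothesis~2 and the cut-off by $l$ must be tracked through the zigzag of weak equivalences of Theorem~\ref{Theorem:main} rather than a naive equality of complexes. A secondary subtlety is the treatment of simplices with $\#\sigma < m$, about which Hypothesis~1 is silent; these must be excised by the column parameter $l$ in a way that is still functorial enough for the enriched nerve lemma to apply to what remains, and verifying this compatibility — essentially checking that the missing low-dimensional simplices are already homotopically represented by the retained higher-dimensional overlaps — is where I expect the proof to require the most care.
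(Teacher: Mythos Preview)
Your intuition about the roles of Hypotheses~1 and~2 is correct, but the machinery you propose is both heavier than needed and aimed at the wrong target. The paper proves this theorem by combining Propositions~\ref{proposition:Rec1} and~\ref{proposition:Rec2}, and the argument is far more elementary than you anticipate: one checks that $R(A_{k,l})$ is \emph{literally} the \v{C}ech nerve of the open cover $\mathcal{U}_l = \{U_\sigma\}_{\sigma \in I_l}$ of $X$, and then applies the classical nerve lemma for topological spaces. Unwinding the definition, a simplex $\chi=\{\sigma_1,\dots,\sigma_r\}\subseteq I_l$ lies in $R(A_{k,l})$ iff there is $\tau\in J_k$ with $\tau\subseteq J^A_{\sigma_s}$ for all $s$, i.e.\ iff $\#J^A_{\cup_s\sigma_s}\geq k$; for $p\leq k\leq q$ Hypothesis~2 makes this equivalent to $U_{\cup_s\sigma_s}\neq\emptyset$, which is exactly the condition for $\chi\in\cechnerve\mathcal{U}_l$. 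Since $l\leq n$ the cover $\mathcal{U}_l$ still covers $X$, and since $l\geq m$ Hypothesis~1 makes it good. Your ``main obstacle'' --- that $R(A_{k,l})$ is only weakly equivalent to a nerve through a zigzag --- simply does not arise.

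Two concrete missteps in the proposal: first, Theorem~\ref{theorem:functorialsegal} is stated for simplicial covers of simplicial sets, not open covers of a topological space $X$, so it is not the tool you want here; the ordinary topological nerve lemma suffices. Second, invoking Theorem~\ref{Theorem:main} to ``translate the column parameter $l$'' is unnecessary: the parameter $l$ already acts directly by restricting the vertex set to $I_l$, and no duality is required to interpret it. The extended Dowker duality does reappear in the paper, but only later in Theorem~\ref{theorem:Application}, where it is used to give a computationally cheaper model of $R(A_{k,l})$ --- a separate concern from the reconstruction result you are proving.
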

Computing~$R(A_{k,l})$ requires to consider all subsets~$\{\sigma \subseteq I|\# \sigma \geq l\}$ and can thus be extremely expensive from a computational point of view. With an application of our Theorem \ref{Theorem:main} we are able to prove the following simplification, which we hope facilitates the usefulness for applications.
\begin{theorem}
    For every relation~$A \from I \times J \to \{0,1\}$ and natural numbers~$k \geq k'$ and~$l\geq l'$ there are zigzags of weak equivalences that make the diagram
\[
    \begin{tikzcd}
    \left|R(A_{k,l})\right| \arrow[d] \arrow[r,dash,"\sim"] & \cspace \catname{R}(A_{1,l},A^{\top}_{k,l}) \arrow[d] \\
    \left|R(A_{k',l'})\right| \arrow[r,dash,"\sim"] & \cspace \catname{R}(A_{1,l'}A^{\top}_{k',l'}) 
    \end{tikzcd}
\]
    commute. 
\end{theorem}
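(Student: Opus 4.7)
The plan is to split the theorem into two pieces. First, for each fixed pair $(k,l)$, I would build a zigzag of weak equivalences $|R(A_{k,l})| \simeq \cspace \catname{R}(A_{1,l}, A^{\top}_{k,l})$ by combining extended Dowker duality with a direct identification. Second, I would verify that all such zigzags are natural in the bifiltration indices, which gives commutativity of the square.

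For the first piece, I would apply Theorem~\ref{Theorem:main} to the composeable pair $(A_{1,l}, A^{\top}_{k,l})$, yielding a natural zigzag
\[
\cspace \catname{R}(A_{1,l}, A^{\top}_{k,l}) \simeq \cspace \catname{C}(A_{1,l}, A^{\top}_{k,l}).
\]
This reduces the theorem to exhibiting a natural weak equivalence between $|R(A_{k,l})|$ and the column-side extended construction. To produce it, I would unpack $\catname{C}(A_{1,l}, A^{\top}_{k,l})$ as a simplicial category whose pieces are column complexes of restrictions of $A^{\top}_{k,l}$, indexed by the simplices of $C(A_{1,l})$. Using the enriched nerve lemma of Theorem~\ref{theorem:functorialsegal}, its classifying space can be presented as a homotopy colimit over the poset of $C(A_{1,l})$. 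The core combinatorial observation is that the bifiltration conditions defining $A_{k,l}$ exactly encode the gluing data of this hocolim: a simplex $\sigma \in R(A_{k,l})$ corresponds to an indexing simplex in $C(A_{1,l})$ together with an overlay simplex in the associated $A^{\top}_{k,l}$-slice. A natural comparison map can then be written down and shown to be a weak equivalence via a Quillen Theorem~A style argument, checking that the relevant fibers (or over-categories) are contractible.

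For the square, commutativity is automatic from the naturality of each step: the bifiltration inclusions induce simplicial maps between all of $R(A_{k,l})$, $\catname{R}(A_{1,l}, A^{\top}_{k,l})$ and $\catname{C}(A_{1,l}, A^{\top}_{k,l})$; the zigzag from Theorem~\ref{Theorem:main} is natural in morphisms of relations by assumption; and the identification with $|R(A_{k,l})|$ is visibly functorial. The main obstacle is the identification step in the second paragraph: one has to verify that the count conditions defining $A_{k,l}$ precisely produce the combinatorics of the hocolim indexing, rather than merely some weakly equivalent surrogate, and that the resulting natural transformation is compatible with refinements in both $k$ and $l$ simultaneously. This is the place where the bisimplicial bookkeeping is most delicate, and where the machinery from Section~\ref{section:NervesSimpCat} has to be applied with care.
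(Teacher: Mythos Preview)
Your overall strategy---apply extended Dowker duality to the pair $(A_{1,l},A^{\top}_{k,l})$ and then identify the column-side construction with $|R(A_{k,l})|$---is exactly the shape of the paper's argument. The discrepancy is in how you unpack $\catname{C}(A_{1,l},A^{\top}_{k,l})$: you write that it is ``indexed by the simplices of $C(A_{1,l})$'' with ``pieces \ldots\ column complexes of restrictions of $A^{\top}_{k,l}$'', but by the paper's convention $\catname{C}(A,B)$ augments simplices of $C(B)$ with pieces of $C(A)$. So $\catname{C}(A_{1,l},A^{\top}_{k,l})$ is indexed by $C(A^{\top}_{k,l})=R(A_{k,l})$, and the attached pieces are restricted column categories of $A_{1,l}$, not the other way around.

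Once the indexing is corrected, your ``identification step'' becomes almost trivial and the machinery you propose (Theorem~\ref{theorem:functorialsegal}, a hocolim presentation, a separate Quillen~A argument) is not needed. The base poset is literally $\catname{R}^{A_{k,l}}$, and for each $\chi\in\ob\catname{R}^{A_{k,l}}$ the attached piece $\catname{C}^{A_{1,l}}_{(\,\cdot\,)_\chi}$ has a terminal object (the full witness set itself), hence contractible nerve. Lemma~\ref{lemma:SimpCatLevelwiseWeakEq} then collapses the simplicial category to $\nerve\catname{R}^{A_{k,l}}$ directly. This is precisely what the paper does: one terminal-object observation plus the levelwise weak-equivalence lemma, followed by the functorial extended Dowker duality (Theorem~\ref{theorem:FunctorialExtendedDowker}) for naturality. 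Your proposed hocolim/Quillen~A route, with the indexing as you wrote it, would force you to recover $R(A_{k,l})$ from a diagram whose base is instead $C(A_{1,l})$; that is neither the right object nor an easier problem, and you would essentially be re-deriving the duality you already invoked.
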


\section{Nerves and simplicial categories} \label{section:NervesSimpCat}
Simplicial sets conceptually sit between simplicial complexes and topological spaces. Just like simplicial complexes they are combinatorial in nature. One way to define them is as contravariant functors~$X\from \Delta \op \to \Set$ from the \textit{simplex category}~$\Delta$ into the category of sets. This amounts to specifying for every~$n \geq 0$ a set of~$n$-simplices~$X([n])$ as well as maps between those sets. The latter amount to face and degeneracy relations and have to satisfy certain consistency conditions, as encoded in the morphisms of~$\Delta$. Simplicial sets can be thought of as directed simplicial complexes with slightly more structure and flexibility. Simplices are no longer uniquely determined by vertices and there can be~$n$-simplices between less than~$n+1$ vertices. This additional structure allows to define a homotopy theory for simplicial sets where we can talk about things like for example homotopy equivalence of simplicial maps or (weak) homotopy equivalence of simplicial sets.  This homotopy theory turns out to be equivalent to that of topological spaces. In particular there is a \textit{realization} functor that turns a simplicial set~$X$ into a topological space~$|X|$ and (by definition) a weak homotopy equivalence between simplicial sets into one between topological spaces. Excellent resources to read up on these notions from combinatorial homotopy theory are \cite{friedman2008elementary} and \cite{goerss2009simplicial}. 
\paragraph{Nerves} Given a category $\catname{C}$ we may form a simplicial set~$\nerve \catname{C}$, called it's \textit{nerve}, that with it's~$n$-simplices summarizes the possible $n$-fold compositions in~$\catname{C}$. Thus an element of~$\nerve \catname{C}([2])$ is a triangular diagram 
\begin{equation*}
    \begin{tikzcd}
    & B \arrow[rd,"g"] &  \\
    A \arrow[ru,"f"] \arrow[rr,"g\circ f"] & &  C,
    \end{tikzcd}.
    \end{equation*} 
while an element of~$\nerve \catname{C}([n])$ may be depicted as
\begin{equation*}
		\begin{tikzcd}[row sep=scriptsize, column sep=scriptsize]
		& B \arrow[rrdd,"g"] \arrow[dd,"h\circ g"] & &  \\ & & & & \\
		& D & & C \arrow[ll,"h"]\\
		A \arrow[ruuu,bend left=8,"f"] \arrow[rrru,bend right=18,"g\circ f"'] \arrow[ru,"h\circ g \circ f"'] & & & & 
		\end{tikzcd}.
\end{equation*}
 Extending the nerve construction to a functor 
\begin{align*}
    \nerve : \Cat \rightarrow \sSet
\end{align*}
is straightforward. Here we denote by~$\Cat$ the category of categories and by~$\sSet$ the category of simplicial sets. Taking the realization of a category's nerve yields a topological space called that it's \textit{classifying space}, which we denote~$\operatorname{B}\catname{C} = |\nerve \catname{C}|$. Constructing simplicial sets as nerves of categories gives us powerful tools for making statements about their homotopical behaviour based of categorical observations.
\\ For instance, given two functors~$F,G : \catname{C} \rightarrow \catname{D}$ between categories~$\catname{C}$ and~$\catname{D}$, we may interpret a natural transformation~$\eta : F \Rightarrow G$ as a functor~$\eta : \catname{C} \times (0 \rightarrow 1) \rightarrow \catname{D}$. Here we denote by~$(0 \rightarrow 1)$ the category with two objects and one non-trivial morphism between those. Using that the nerve operation is a right adjoint and thus preserves products we immediately get the following. 
\begin{lemma}\label{lemma:NatTrafoHomotopy}
Taking the nerve of two functors~$F,G : \catname{C} \rightarrow \catname{D}$, connected by the natural transformation~$\eta : F \Rightarrow G$, yields homotopic simplicial maps~$\nerve F \sim \nerve G$.
\end{lemma}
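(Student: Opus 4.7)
The plan is to make precise the two observations already highlighted in the paragraph immediately preceding the statement and then to recognize that their combination delivers exactly the data of a simplicial homotopy. First, I would encode the natural transformation~$\eta$ as a single functor~$H \from \catname{C} \times (0 \to 1) \to \catname{D}$: on objects $H(c,0) = F(c)$ and $H(c,1) = G(c)$; on morphisms $(f, \mathrm{id}_0) \mapsto F(f)$, $(f, \mathrm{id}_1) \mapsto G(f)$, and $(f\from c \to c', 0 \to 1) \mapsto \eta_{c'} \circ F(f) = G(f) \circ \eta_c$, the final equality being precisely the naturality square of $\eta$. Functoriality of $H$ reduces to functoriality of $F$, $G$ together with naturality of $\eta$, so this prescription is well-defined. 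Letting $\delta^0, \delta^1 \from \ast \to (0 \to 1)$ denote the two object inclusions, we have by construction $H \circ (\mathrm{id}_{\catname{C}} \times \delta^0) = F$ and $H \circ (\mathrm{id}_{\catname{C}} \times \delta^1) = G$.

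Second, I would apply the nerve functor. The product-preservation claim reduces to the natural isomorphism $\nerve(\catname{C} \times \catname{D}) \cong \nerve \catname{C} \times \nerve \catname{D}$, which I would verify directly from the formula $(\nerve \catname{C})_n = \Cat([n], \catname{C})$ using the universal property $\Cat([n], \catname{C} \times \catname{D}) \cong \Cat([n], \catname{C}) \times \Cat([n], \catname{D})$, rather than invoking any abstract adjunction. Since $\nerve(0 \to 1) = \Delta^1$, applying $\nerve$ to $H$ and using this isomorphism yields a simplicial map
\[
    \nerve H \from \nerve \catname{C} \times \Delta^1 \to \nerve \catname{D}.
\]
Naturality of the product isomorphism together with the two endpoint identities above ensures that the restriction of $\nerve H$ along the vertex inclusions $\nerve \catname{C} \times \Delta^0 \hookrightarrow \nerve \catname{C} \times \Delta^1$ recovers $\nerve F$ and $\nerve G$ respectively. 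By the standard definition this is exactly the data of a simplicial homotopy $\nerve F \sim \nerve G$, which is the claim.

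No step presents a genuine obstacle: the whole argument is bookkeeping. The only point worth pausing over is product-preservation of $\nerve$, and even that follows from the one-line check at the level of $n$-simplices indicated above. In particular no fibrancy or Kan-condition hypothesis is needed for $\nerve \catname{D}$, since we produce an on-the-nose map out of a product with $\Delta^1$ rather than having to extend a map from the boundary.
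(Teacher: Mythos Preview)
Your argument is correct and follows essentially the same route as the paper: encode~$\eta$ as a functor $\catname{C}\times(0\to 1)\to\catname{D}$, use that the nerve preserves products, and read off a simplicial homotopy from the resulting map $\nerve\catname{C}\times\Delta^1\to\nerve\catname{D}$. The only cosmetic difference is that the paper justifies product-preservation by invoking that $\nerve$ is a right adjoint, whereas you check it directly at the level of $n$-simplices; both are valid and the underlying proof is the same.
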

An object~$t$ in the category~$\catname{C}$ is called terminal if it exists for every object~$c$ of~$\catname{C}$ a unique morphism~$c \rightarrow t$. This is equivalent to characterising~$t$ as the colimit over the empty diagram in~$\catname{C}$.
\begin{lemma}
    The nerve of a category with a terminal object is contractible. 
\end{lemma}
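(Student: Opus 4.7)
The plan is to exhibit a homotopy between the identity on $\nerve \catname{C}$ and a constant map, which is exactly what contractibility means at the level of simplicial sets. The tool is the already-stated Lemma \ref{lemma:NatTrafoHomotopy}, which converts natural transformations into simplicial homotopies.

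First, let $t$ be a terminal object of $\catname{C}$. I would introduce the constant functor $T \from \catname{C} \to \catname{C}$ sending every object to $t$ and every morphism to $\operatorname{id}_t$; this is obviously a functor. Next, I would define a natural transformation $\eta \from \operatorname{id}_{\catname{C}} \Rightarrow T$ by letting $\eta_c \from c \to t$ be the unique morphism guaranteed by terminality. Naturality is immediate: for any morphism $f \from c \to c'$ the two composites $c \to c' \to t$ and $c \to t \to t$ are both morphisms from $c$ to $t$, hence they coincide by uniqueness.

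Applying the nerve functor and invoking Lemma \ref{lemma:NatTrafoHomotopy}, I get a simplicial homotopy $\nerve \operatorname{id}_{\catname{C}} \sim \nerve T$. The left-hand map is the identity on $\nerve \catname{C}$, and the right-hand map factors through $\nerve \{t\}$, the nerve of the one-object one-morphism category, which is the terminal simplicial set $\Delta^0$. Hence $\operatorname{id}_{\nerve \catname{C}}$ is homotopic to a constant map, so $\nerve \catname{C}$ is contractible.

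I do not anticipate a real obstacle here; the only thing to be careful about is that the homotopy produced by Lemma \ref{lemma:NatTrafoHomotopy} is the standard simplicial notion (arising via the product with $\nerve(0 \to 1) = \Delta^1$), so the homotopy to a constant map genuinely witnesses contractibility in $\sSet$ rather than merely weak contractibility. By the same token, applying the realization functor transports the result to the statement that the classifying space $\cspace \catname{C}$ is contractible as a topological space.
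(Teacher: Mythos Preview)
Your proof is correct and follows essentially the same approach as the paper: both use the natural transformation between the identity functor and the constant functor at the terminal object, then invoke Lemma~\ref{lemma:NatTrafoHomotopy}. The only cosmetic difference is that the paper factors the constant functor as $G\circ F$ through the one-object category $(0)$ to phrase the conclusion as ``$\nerve F$ is a deformation retract'', whereas you work with the constant endofunctor $T$ directly; the content is identical.
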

\begin{proof}
    Let~$\catname{C}$ be such a category. Denote by~$(0)$ the category with just one object and the respective identity morphism. There is a unique functor~$F \from \catname{C} \to (0)$ and we define~$G \from (0) \to \catname{C}$ by mapping to the terminal object in~$\catname{C}$. Using the universal property of terminal objects we obtain a natural transformation~$G \circ F  \Rightarrow \operatorname{id}$, which by Lemma (\ref{lemma:NatTrafoHomotopy}) exposes~$\nerve F$ to be a deformation retract.   
\end{proof}
Dually, an intitial object~$i$ in a category~$\catname{C}$ admits for every object~$c \in \ob \catname{C}$ a unique morphism~$i \rightarrow c$ and is characterised as the limit over the empty diagram in~$\catname{C}$. Applying a similar strategy as above we prove the dual statement. 
\begin{lemma}
    The nerve of a category with an initial object is contractible. 
\end{lemma}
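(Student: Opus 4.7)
The plan is to dualize, step by step, the proof just given for the terminal-object case. Let $\catname{C}$ be a category with an initial object $i \in \ob \catname{C}$. I would again consider the unique functor $F \from \catname{C} \to (0)$ to the one-object one-morphism category, and define $G \from (0) \to \catname{C}$ to send the unique object of $(0)$ to $i$. The composite $F \circ G$ is the identity on $(0)$ for purely size reasons. The only piece of structure that must be reconstructed is a natural transformation comparing $G \circ F$ (the constant functor at $i$) with $\operatorname{id}_{\catname{C}}$, and in contrast to the terminal case this transformation will now point in the opposite direction.

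The key step is thus to build a natural transformation $\eta \from G \circ F \Rightarrow \operatorname{id}_{\catname{C}}$. For every $c \in \ob \catname{C}$, the universal property of the initial object supplies a unique morphism $\eta_c \from i \to c$, and I would take these to be the components of $\eta$. Naturality is automatic: given $f \from c \to c'$, both $f \circ \eta_c$ and $\eta_{c'}$ are morphisms $i \to c'$, so the uniqueness clause of the universal property forces them to agree. This is the only nontrivial categorical input, and it is essentially immediate.

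Feeding $\eta$ into Lemma \ref{lemma:NatTrafoHomotopy} yields $\nerve (G \circ F) \sim \nerve \operatorname{id}_{\catname{C}} = \operatorname{id}_{\nerve \catname{C}}$, while $\nerve F \circ \nerve G = \operatorname{id}_{\nerve (0)}$ holds on the nose. Hence $\nerve F \from \nerve \catname{C} \to \nerve (0) = \Delta[0]$ is a simplicial homotopy equivalence, so $\nerve \catname{C}$ is contractible. I do not expect any real obstacle here: Lemma \ref{lemma:NatTrafoHomotopy} is symmetric in the two functors it compares, so the reversed direction of $\eta$ relative to the terminal-object proof costs nothing. One could alternatively reduce to the terminal case by passing to $\catname{C} \op$, in which $i$ becomes terminal, and noting that the realizations of $\nerve \catname{C}$ and $\nerve (\catname{C} \op)$ are homeomorphic via simplex reversal; but the direct dualization above is shorter and keeps the argument at the level of simplicial sets rather than topological spaces.
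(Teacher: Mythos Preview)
Your proposal is correct and matches the paper's approach exactly: the paper simply says to apply ``a similar strategy'' to the terminal-object proof, and you have spelled out precisely that dualization. The only cosmetic difference is that you also mention the alternative via $\catname{C}\op$, which the paper does not.
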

Given a functor~$F : \catname{C} \rightarrow \catname{D}$ we can form for every object~$d\in \ob \catname{D}$ it's fiber~$F/d$. This is a category with objects given by pairs~$(c\in \ob \catname{C},\alpha : F(c) \rightarrow d)$. It's importance for us is due to a result, proven in \cite{Quillen} and known as \textit{Quillens Theorem A}, that is useful for identifying functors which become weak equivalences upon applying the nerve.
\begin{theorem}\label{QuillenA}
    If for every~$d\in \ob \catname{D}$, the nerve of the fiber~$F/d$ is contractible, then~$\nerve F$ is a weak equivalence.   
\end{theorem}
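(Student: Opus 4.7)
The plan is to prove Theorem \ref{QuillenA} through a two-sided bar construction, combined with the diagonal lemma for bisimplicial sets: a map of bisimplicial sets that is a slicewise weak equivalence in one direction induces a weak equivalence on diagonals. I would build a bisimplicial set $B$ mediating between $\nerve \catname{C}$ and $\nerve \catname{D}$, verify that both of its canonical projections are slicewise weak equivalences by exploiting two forms of contractibility (slice categories with extremal objects, and the hypothesis on $F/d$), and then recognise $\nerve F$ inside the resulting zigzag.

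Concretely, let $B_{p,q}$ be the set of tuples $(c_0 \to \cdots \to c_p,\ \alpha \from F(c_p) \to d_0,\ d_0 \to \cdots \to d_q)$, with face and degeneracy operators acting on the $p$- and $q$-chains separately and composing $\alpha$ with outer face maps where appropriate. The projection $\pi_1 \from B \to \nerve \catname{C}$, with target extended to be constant in the $q$-direction, restricts on each $p$-slice over $\sigma = (c_0 \to \cdots \to c_p)$ to the nerve of the under-category of $\catname{D}$ under $F(c_p)$, which is contractible by virtue of its initial object $(F(c_p), \mathrm{id})$. Symmetrically, the projection $\pi_2 \from B \to \nerve \catname{D}$ restricts on each $q$-slice over $\tau = (d_0 \to \cdots \to d_q)$ to $\nerve (F/d_0)$, which is contractible by hypothesis. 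Both $\pi_1$ and $\pi_2$ are therefore slicewise weak equivalences, so the diagonal lemma yields
\[
    \nerve \catname{C} \xleftarrow{\sim} \diag B \xrightarrow{\sim} \nerve \catname{D}.
\]

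To close the argument, I need to align this zigzag with $\nerve F$. I would exhibit a canonical section $s$ of $\pi_1$ at the bisimplicial level by sending $(c_0 \to \cdots \to c_p)$ to the tuple with $\alpha = \mathrm{id}_{F(c_p)}$ and with the $q$-chain fully degenerate on $F(c_p)$; on diagonals, $\pi_2 \circ \diag(s)$ equals $\nerve F$ on the nose. The section is a weak equivalence by two-out-of-three applied to $\pi_1$, and post-composition with $\pi_2$ then forces $\nerve F$ to be a weak equivalence. The main obstacle is the invocation of the diagonal lemma for bisimplicial sets, which I would not reprove but rather cite from \cite{goerss2009simplicial}, where it is deduced either from a Bousfield--Friedlander spectral sequence or from a direct skeletal filtration argument; the rest of the proof is bookkeeping about how face operators on $B$ interact with composition by $\alpha$.
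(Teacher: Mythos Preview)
The paper does not prove this theorem; it merely cites \cite{Quillen}. Your bisimplicial bar construction $B$ and the slicewise contractibility arguments for $\pi_1$ and $\pi_2$ constitute the standard proof, essentially Quillen's own, and that part is correct.

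The final step, however, does not work as written. The map $s$ you describe fails to be bisimplicial: applying the top face $d_p$ in the $p$-direction to $s(c_0\to\cdots\to c_p)$ yields the tuple $(c_0\to\cdots\to c_{p-1},\ F(c_{p-1}\to c_p),\ F(c_p)=\cdots=F(c_p))$, which is not $s(d_p\sigma)$ since both the connecting morphism and the $q$-chain still live over $F(c_p)$ rather than $F(c_{p-1})$. And even setting that aside, $\pi_2\circ\diag(s)$ sends $\sigma=(c_0\to\cdots\to c_n)$ to the totally degenerate $n$-simplex on $F(c_n)$, not to $\nerve F(\sigma)=(F(c_0)\to\cdots\to F(c_n))$; so the claimed on-the-nose equality is false. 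The repair is standard but is more than the bookkeeping you suggest: abandon the section and instead construct a simplicial homotopy $\nerve F\circ\pi_1 \simeq \pi_2$ on $\diag B$, using that each diagonal $n$-simplex $(c_\bullet,\alpha,d_\bullet)$ determines a $(2n{+}1)$-simplex
\[
F(c_0)\to\cdots\to F(c_n)\xrightarrow{\alpha}d_0\to\cdots\to d_n
\]
in $\nerve\catname{D}$, from which the homotopy is extracted. Two-out-of-three against the weak equivalence $\pi_1$ then finishes the argument.
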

\paragraph{Simplicial categories and bisimplicial sets}
Viewing simplicial sets as contravariant functors from the simplex category~$\Delta$ into the category~$\catname{Set}$ of sets suggests to also consider functors with the same domain but different target categories. Such functors are called \textit{simplicial objects in} the respective target category. A first example is simplicial objects in~$\catname{Cat}$. Given such a functor into the category of categories~$\catname{C} \from \Delta \op \to \catname{Cat}$ we may compose it with the nerve functor from above, giving us
\[
    (\nerve \circ \catname{C})  = \Delta \op \overset{\catname{C}}{\rightarrow} \catname{Cat} \overset{\nerve}{\rightarrow} \sSet.
\]
This is an instance of a simplicial object in~$\sSet$ also called a \textit{bisimplicial set}. Explaining the naming convention, we can equivalently view a bisimplcial set~$X \from \Delta \op \to \sSet$ as a functor~$\Delta \op \times \Delta \op \to \Set$. There are several ways to turn a bisimplicial set back into a mere simplicial set but all of these are naturally isomorphic. Often it is most convenient to take the \textit{diagonal} that sends~$X \from \Delta \op \rightarrow \sSet$ to~$\diag X \from \Delta \op \to \Set$ with~$\diag X([n]) = X([n],[n])$.

A very useful property of the diagonal functor is that it preserves weak equivalences in the following sense. 
\begin{lemma}\label{lemma:SimpCatLevelwiseWeakEq}
    If~$F : X \rightarrow Y$ is a map of bisimplicial sets, such that for every~$n \geq 0$ the component~$F_{[n]} \from X([n]) \to Y([n])$ is a weak equivalence, the induced map~$\diag F : \diag X \rightarrow \diag Y$ is a weak equivalence as well.  
\end{lemma}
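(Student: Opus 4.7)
The plan is to proceed by a skeletal induction in the outer simplicial direction. Viewing the bisimplicial set $X$ as a simplicial object $X_\bullet \from \Delta\op \to \sSet$, there is a canonical skeletal filtration $\operatorname{sk}_0 X \hookrightarrow \operatorname{sk}_1 X \hookrightarrow \cdots$ with colimit $X$, where $\operatorname{sk}_n X$ is obtained from $\operatorname{sk}_{n-1} X$ by a pushout attaching cells of the form $X([n]) \times \Delta[n]$ along a subobject containing the boundary data plus the degenerate image of the latching object $L_n X$. Because $X$ is a bisimplicial \emph{set}, the latching map $L_n X \hookrightarrow X([n])$ is automatically a monomorphism of simplicial sets, so every such attaching map is a cofibration.

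I would then induct on $n$ to show that $\diag(\operatorname{sk}_n F) \from \diag(\operatorname{sk}_n X) \to \diag(\operatorname{sk}_n Y)$ is a weak equivalence for every $n$. The base case $n=0$ just gives $F_{[0]}$, which is a weak equivalence by hypothesis. For the inductive step, $\diag$ preserves the pushout defining $\operatorname{sk}_n X$, so $\diag(\operatorname{sk}_n X)$ is built from $\diag(\operatorname{sk}_{n-1} X)$ by a pushout along a cofibration whose corner data is controlled by $X([n])$ and $L_n X$. An auxiliary induction, of the same flavor applied to the tower of latching objects, shows that $L_n F \from L_n X \to L_n Y$ is itself a weak equivalence. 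Combined with the inductive hypothesis and $F_{[n]}$ being a weak equivalence, the gluing lemma for weak equivalences along cofibrations (left properness of $\sSet$) yields that $\diag(\operatorname{sk}_n F)$ is a weak equivalence. Passing to the sequential colimit along cofibrations preserves weak equivalences, which concludes the argument.

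The main obstacle is controlling the latching objects: one needs that $L_n F \from L_n X \to L_n Y$ is a weak equivalence whenever $F$ is levelwise so, which is handled by the secondary induction mentioned above. This step crucially uses the bisimplicial-\emph{set} hypothesis, since it guarantees that the iterated degeneracy relations are set-theoretic inclusions, hence monomorphisms of simplicial sets, so that $L_n X$ is itself built by gluings along cofibrations. Alternatively, one can sidestep the explicit latching-object bookkeeping by identifying $\diag X$ with the homotopy colimit of $X_\bullet \from \Delta\op \to \sSet$, a classical comparison going back to Bousfield--Kan, and invoking the homotopy-invariance of homotopy colimits; but verifying this identification requires comparable machinery, so either route carries roughly the same technical cost.
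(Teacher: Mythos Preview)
Your argument is correct. The skeletal filtration in the outer direction, combined with the fact that every bisimplicial set is automatically Reedy cofibrant (latching maps are monomorphisms because degeneracies are split injections), reduces the claim to iterated applications of the gluing lemma and the stability of weak equivalences under sequential colimits along cofibrations. Your remark that the latching maps $L_n F$ must themselves be shown to be weak equivalences is on point, and the secondary induction you indicate handles it; equivalently, one can simply observe that $L_n X$ is a finite iterated pushout of the $X([m])$ along monomorphisms, hence a homotopy colimit, so levelwise weak equivalences pass through.

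By comparison, the paper does not give its own proof of this lemma at all: it records the statement and defers entirely to Goerss--Jardine, \emph{Simplicial Homotopy Theory}, Chapter~IV, Proposition~1.9. So you have supplied substantially more than the paper does. Your outline is essentially the standard argument one finds in that reference (or, in the packaged form, the statement that geometric realization is left Quillen for the Reedy model structure on simplicial objects in $\sSet$, together with the automatic Reedy cofibrancy of bisimplicial sets). The alternative route you mention, identifying $\diag X$ with $\operatorname{hocolim}_{\Delta\op} X_\bullet$ via the Bousfield--Kan map, is also valid and is another common way the result is phrased; as you say, it carries comparable overhead.
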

This is proven as Proposition 1.9 of Chapter 4 in \cite{goerss2009simplicial}. \\
A simplicial object in~$\Cat$ may equivalently be viewed as a category internal to simplicial sets. This is a category with a simplicial set of objects~$C_0$, a simplicial set of morphisms~$C_1$ and simplicial structure maps of which we may suppress the identity assigning map and the composition map to display
\begin{equation*}
        \begin{tikzcd}
        C_0 & C_1 \arrow[l,shift right,"s"'] \arrow[l,shift left,"t"]
        \end{tikzcd}.
    \end{equation*} 
This information is equivalently encoded in a functor~$\catname{C} \from \Delta \op \to \sSet$ with
\[
    \catname{C}([n]) = \begin{tikzcd}
        C_0([n]) & C_1([n]) \arrow[l,shift right,"s_n"'] \arrow[l,shift left,"t_n"]
        \end{tikzcd}.
\]
There is a generalization of Quillens theorem A to functors between simplicial categories, or equivalently, maps of simplicial objects in $\Cat$. Given such a functor~$F : \catname{C} \rightarrow \catname{D}$ we construct for every~$n \geq 0$ and object~$d \in \ob \catname{D}([n])$ a simplicial object in~$\Cat$ denoted by $F/d$ and with category of~$m$-simplices given by 
\begin{equation*}
    \bigsqcup_{\theta : [m] \rightarrow [n]} F_{[m]}/\theta^*d
\end{equation*}
In \cite{Waldhausen} it is proven that
\begin{theorem}\label{QuillenA'}
    If for every~$d \in \ob \catname{D}([n])$ the simplicial category~$\diag \nerve \circ F/d$ is contractible, then $F$ induces a weak homotopy equivalence $\diag (\nerve \circ \catname{C}) \overset{\sim}{\to} \diag (\nerve \circ \catname{D})$.   
\end{theorem}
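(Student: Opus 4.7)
The plan is to reduce Theorem \ref{QuillenA'} to the classical Quillen Theorem A (Theorem \ref{QuillenA}) via the Grothendieck--Thomason construction. For a simplicial object $\catname{C} : \Delta\op \to \Cat$, one assembles its Grothendieck construction $\int \catname{C}$: an ordinary category whose objects are pairs $([n], c)$ with $c \in \ob \catname{C}([n])$, and whose morphisms $([n], c) \to ([n'], c')$ consist of a simplicial operator $\theta : [n] \to [n']$ together with a morphism $c \to \theta^* c'$ in $\catname{C}([n])$. Thomason's theorem on homotopy colimits in $\Cat$ supplies a natural weak equivalence $\nerve(\int \catname{C}) \simeq \diag(\nerve \circ \catname{C})$, which reduces the problem to showing that the induced functor $\int F : \int \catname{C} \to \int \catname{D}$ becomes a weak equivalence upon nerving.

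At this stage I apply classical Quillen A to $\int F$ at an object $([n], d) \in \ob \int \catname{D}$. Unwinding definitions, an object of the classical fiber $\int F / ([n], d)$ is a quadruple $([m], c, \theta : [m] \to [n], \phi : F_{[m]}(c) \to \theta^* d)$, so that the objects at a fixed $[m]$ biject with $\bigsqcup_{\theta : [m] \to [n]} \ob (F_{[m]}/\theta^* d)$; checking the morphism structure identifies this fiber with the Grothendieck construction $\int (F/d)$ of the simplicial category $F/d$ from the statement. A second invocation of Thomason's theorem then yields $\nerve(\int F / ([n], d)) \simeq \diag(\nerve \circ F/d)$, which is contractible by hypothesis. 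Classical Quillen A therefore implies that $\nerve(\int F)$ is a weak equivalence, and transporting back along Thomason's natural equivalence produces the required weak equivalence $\diag(\nerve \circ \catname{C}) \overset{\sim}{\to} \diag(\nerve \circ \catname{D})$.

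The main obstacle is the identification $\int F / ([n], d) \cong \int (F/d)$: it demands a careful choice of variance convention for the Grothendieck construction of a contravariant functor, together with a patient verification that the simplicial structure maps of $F/d$, acting by precomposing $\theta : [m] \to [n]$ with operators $\psi : [m'] \to [m]$ and applying $\psi^*$ to the underlying data $(c, \phi)$, line up exactly with composition in $\int \catname{C}$ sitting over the matching factorisation in $\int \catname{D}$. None of this is conceptually deep, but the notational overhead is substantial, which is why the detailed proof is found in \cite{Waldhausen} rather than reproduced here.
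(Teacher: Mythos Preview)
The paper does not supply its own proof of this theorem; it simply cites \cite{Waldhausen} and moves on. Your proposal therefore cannot be compared against an in-paper argument. That said, the route you sketch is correct and is one of the standard ways to establish the simplicial Theorem~A: pass from a simplicial object in $\Cat$ to its Grothendieck construction, invoke Thomason's theorem to identify $\nerve(\int \catname{C})$ with $\diag(\nerve\circ\catname{C})$ up to weak equivalence, and then apply the ordinary Theorem~\ref{QuillenA} to $\int F$, using the key identification $\int F/([n],d)\cong \int(F/d)$. Waldhausen's original argument works more directly at the level of bisimplicial sets rather than routing through Thomason, but the two approaches are equivalent in spirit and your reduction is entirely sound; your closing remark that the bookkeeping is the only real cost is accurate.
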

\paragraph{Topological categories}
\textit{Topological categories} have topological spaces of objects, of morphisms and continuous structure maps. There are different ways to turn a topological category into a topological space \cite{segal1974categories}. Here we will use the \textit{geometric realization} and denote, for a topological category~$\catname{T}$, the resulting space with~$\operatorname{B} \catname{T}$ ,thereby overloading the symbol $\cspace$, which we also used for the classifying space of a mere category. \\
Starting with a simplicial category
\[ \catname{C} = 
\begin{tikzcd}
    C_0 & C_1 \arrow[l,shift left,"t"] \arrow[l,shift right,"s"'] 
\end{tikzcd},
\]
we can realize to a topological category as 
\[
|\catname{C}| = 
\begin{tikzcd}
    \left|C_0\right| & \arrow[l,shift left,"|t|"] \arrow[l,shift right,"|s|"'] |C_1|,
\end{tikzcd}
\]
which we may turn into a topological space~$\operatorname{B}|\catname{C}|$. As already discussed above we can equivalently view a simplicial category~$\catname{C}$ as a functor~$\Delta \op \to \catname{Cat}$, turn it into a simplicial set~$\diag (\nerve \circ \catname{C})$ and realize to~$|\diag (\nerve \circ \catname{C})|$. There is a natural homeomorphism~$|\diag \nerve \circ \catname{C}| \simeq \operatorname{B}|\catname{C}|$. 
\\
A simplicial space is a simplicial object in the category of topological spaces i.e a functor~$\catname{T} \from \Delta \op \to \catname{Top}$. 
Different from bisimplicial sets, not all levelwise weak equivalences between simplicial spaces realize to weak equivalences of topological spaces. A simplicial space is called~$\textit{good}$ if all degeneracy maps are closed cofibrations. This is for example true if it is levelwise a CW-complex. The following is proven in \cite{segal1974categories}.
\begin{proposition}\label{proposition:DegreewiseWeakTop}
    Let~$F \from \catname{C} \to \catname{D}$ be a map between good simplicial spaces that is degreewise a weak equivalence. Then the induced map~$\operatorname{B}F \from \operatorname{B}\catname{C} \to \operatorname{B}\catname{D}$ is also a weak equivalence.  
\end{proposition}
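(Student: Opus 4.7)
The plan is to reduce to a comparison between the ordinary geometric realization $\cspace \catname{C}$ and the so-called fat realization $\|\catname{C}\|$, for which the analogous statement holds with no goodness hypothesis by a straightforward skeletal induction. The fat realization is defined as the quotient of $\bigsqcup_n \catname{C}_n \times \Delta^n$ in which only the face identifications are imposed, and it comes equipped with a natural filtration $\|\catname{C}\|_0 \subseteq \|\catname{C}\|_1 \subseteq \cdots$ whose consecutive stages sit in pushout squares of the form
\begin{equation*}
\begin{tikzcd}
\catname{C}_n \times \partial \Delta^n \arrow[r] \arrow[d,hookrightarrow] & \|\catname{C}\|_{n-1} \arrow[d,hookrightarrow] \\
\catname{C}_n \times \Delta^n \arrow[r] & \|\catname{C}\|_n
\end{tikzcd}
\end{equation*}
in which the vertical legs are closed cofibrations thanks to the inclusion $\partial \Delta^n \hookrightarrow \Delta^n$ being one, with no goodness hypothesis needed.

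First I would establish that $F$ induces a weak equivalence $\|F\| \from \|\catname{C}\| \to \|\catname{D}\|$ between the fat realizations. The base case $n=0$ is exactly the hypothesis that $F_0$ is a weak equivalence. For the inductive step I would apply the standard gluing lemma for weak equivalences along cofibrations to the square above: since the left-hand legs are closed cofibrations, and the maps at the remaining three corners are weak equivalences (by the inductive hypothesis, by $F_n$ being a weak equivalence, and by the fact that multiplying $F_n$ with the CW complexes $\Delta^n$ and $\partial\Delta^n$ preserves weak equivalences), the induced map on pushouts is again a weak equivalence. Passing to the sequential colimit along closed cofibrations then yields that $\|F\|$ itself is a weak equivalence.

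Second I would invoke the comparison theorem of \cite{segal1974categories}, which states that for any good simplicial space $\catname{C}$ the natural quotient map $q_{\catname{C}} \from \|\catname{C}\| \to \cspace \catname{C}$ is a homotopy equivalence. Naturality of $q$ produces a commutative square
\begin{equation*}
\begin{tikzcd}
\|\catname{C}\| \arrow[r,"\|F\|"] \arrow[d,"\sim"'] & \|\catname{D}\| \arrow[d,"\sim"] \\
\cspace \catname{C} \arrow[r,"\cspace F"'] & \cspace \catname{D}
\end{tikzcd}
\end{equation*}
in which the vertical maps and the top horizontal map are weak equivalences, so that $\cspace F$ is one as well by two-out-of-three.

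The main obstacle is precisely this comparison theorem, since it is the only step in which the goodness hypothesis enters. Its proof proceeds by filtering $\cspace \catname{C}$ further by the images of non-degenerate simplices and showing that each inclusion of the degenerate part into the whole is a closed cofibration, a condition that is equivalent to every degeneracy map of $\catname{C}$ being a closed cofibration. For general simplicial spaces the comparison map $q_{\catname{C}}$ need not be an equivalence and the original statement fails, so goodness is used solely to make this one lemma available; once it is granted, the remainder is the routine skeletal induction outlined above.
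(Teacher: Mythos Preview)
The paper does not supply its own proof of this proposition; it simply cites \cite{segal1974categories}. Your argument is correct and is in fact the classical one: the comparison between the fat realization $\|\catname{C}\|$ and the ordinary realization $\cspace\catname{C}$, together with a skeletal induction on the fat realization, is exactly how Segal establishes the result in the appendix of that paper (Proposition~A.1). So there is nothing to contrast here---you have reproduced the cited proof.
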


\section{A simplicial Segal nerve lemma} \label{section:simpsegal}
In the introduction we discussed how to associate a topological category to an open cover of a topological space~$X$ such that it's classifying space has the homotopy type of~$X$. We will now give a similar result for the case where~$X$ is a simplicial set and we form a simplicial category~$X_{\mathcal{U}}$ from a \textit{simplicial cover}~$\mathcal{U}$ of~$X$. \\
First we should make precise what we mean by simplicial cover. Call~$Y$ a \textit{sub-simplicial set} of~$X$ if there is a simplicial map~$Y \overset{\iota}{\hookrightarrow} X$ such that for all~$n \geq 0$ the map of sets~$Y([n]) \overset{\iota_{[n]}}{\to} X([n])$ is an inclusion. We call a family~$U=\{U_i \overset{\iota_i}{\hookrightarrow} X\}_{i \in I}$ a simplicial cover of~$I$ if for all~$n \geq 0$ we have~$\bigcup_{i \in I} U_i([n]) = X([n])$. Let us denote by $U_{\sigma}$ the intersection~$\bigcap_{\alpha \in \sigma} U_{\alpha}$, formally given by the limit of the diagram made up of all the relevant inlusion maps~$U_i \overset{\iota_i}{\hookrightarrow} X$. We may define a poset category~$\catname{U}$ with objects given as
\[
    \ob \catname{U} = \{\sigma \subseteq I | U_{\sigma} \neq \emptyset\}.
\]
The nerve of this category~$\nerve \catname{U}$ is a simplicial set that we can intuitively think of as (a subdivision of) the \v{C}ech nerve of the cover~$\mathcal{U}$. We may augment~$\catname{U}$ to a simplicial category
\begin{equation*} X_{\mathcal{U}} = 
    \begin{tikzcd}
        \bigsqcup_{\sigma \subseteq I} U_{\sigma} & \bigsqcup_{\sigma' \subseteq \sigma} U_{\sigma} \arrow[l,shift right] \arrow[l,shift left]
    \end{tikzcd}.
\end{equation*}
The source and target maps are given by identity, mapping~$x \in U_{\sigma}([n])$ from the component~$\sigma' \subseteq \sigma$ to~$x \in U_{\sigma}$ and inclusion, mapping it to~$x \in U_{\sigma'}$. This is supposed to directly mimic the construction of a topological category from an open cover of a topological space as in \cite{Segal}. 
\begin{proposition} \label{SimplicialSegal}
    The natural map~$\diag (\nerve \circ X_{\mathcal{U}}) \rightarrow X$ is a weak homotopy equivalence.  
\end{proposition}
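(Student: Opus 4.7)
The plan is to apply Lemma \ref{lemma:SimpCatLevelwiseWeakEq}. First I package the natural map as the diagonal of a bisimplicial map. Let $\tilde{X}$ denote the bisimplicial set whose $(p,q)$-bisimplices are $X([q])$, constant in $p$, so that $\diag \tilde{X} = X$. A $(p,q)$-bisimplex of $\nerve \circ X_{\mathcal{U}}$ is a chain
\[
\sigma_0 \supseteq \sigma_1 \supseteq \cdots \supseteq \sigma_p
\]
of subsets of $I$ together with a common witness $y \in U_{\sigma_0}([q]) \subseteq X([q])$, and sending this datum to $y$ defines a bisimplicial map $\nerve \circ X_{\mathcal{U}} \to \tilde{X}$ whose diagonal is the natural map in question.

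By Lemma \ref{lemma:SimpCatLevelwiseWeakEq} it suffices to check that for each fixed $q \geq 0$ the simplicial map $\nerve(X_{\mathcal{U}}([q])) \to X([q])$, with codomain the discrete simplicial set on $X([q])$, is a weak equivalence. Because every morphism of $X_{\mathcal{U}}([q])$ preserves the underlying element of $X([q])$, the category $X_{\mathcal{U}}([q])$ splits as a disjoint union of full subcategories indexed by $y \in X([q])$, and the component over $y$ is isomorphic to the poset $P_y^q := \{\sigma \subseteq I : y \in U_\sigma([q])\}$ in which $\sigma \to \sigma'$ is a morphism whenever $\sigma \supseteq \sigma'$. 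Hence
\[
\nerve(X_{\mathcal{U}}([q])) = \bigsqcup_{y \in X([q])} \nerve(P_y^q),
\]
and it remains to show that each $\nerve(P_y^q)$ is contractible.

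This last step invokes the earlier lemma that a category with an initial object has a contractible nerve. The set $\sigma_y^q := \{i \in I : y \in U_i([q])\}$ is non-empty because $\mathcal{U}$ is a simplicial cover, it lies in $P_y^q$, and any $\sigma \in P_y^q$ satisfies $\sigma \subseteq \sigma_y^q$; under the morphism convention above, this makes $\sigma_y^q$ an initial object of $P_y^q$, so $\nerve(P_y^q)$ is contractible. Composing the three steps yields the proposition. The key bookkeeping point — and essentially the only place one can slip up — is the orientation of the morphisms of $X_{\mathcal{U}}([q])$: they go from large $\sigma$ to small $\sigma$, which is precisely what makes $\sigma_y^q$ initial rather than terminal and justifies contractibility in the direction needed by Lemma \ref{lemma:SimpCatLevelwiseWeakEq}.
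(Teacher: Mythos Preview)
Your proof is correct. The paper argues instead via the simplicial Quillen Theorem A (Theorem \ref{QuillenA'}): it views $X$ as a simplicial category with only identity morphisms, considers the obvious functor $F\colon X_{\mathcal U}\to X$, and checks that each comma category $F/x$ is contractible by exhibiting, in every simplicial level, the maximal $\sigma=\{i\in I\mid \theta^*x\in U_i\}$ as an extremal object. Your route bypasses Theorem \ref{QuillenA'}: you fix the simplicial degree $q$, split $X_{\mathcal U}([q])$ as $\bigsqcup_y P_y^q$, contract each $P_y^q$ via its initial object $\sigma_y^q$, and then invoke Lemma \ref{lemma:SimpCatLevelwiseWeakEq} directly. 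The combinatorial core is identical in the two arguments; your packaging is a bit more elementary since it only needs the levelwise-diagonal lemma, while the paper's packaging via Theorem \ref{QuillenA'} would adapt more readily to targets that are not levelwise discrete. Incidentally, your care about the morphism orientation is well placed: with the paper's source/target conventions the maximal $\sigma$ is indeed initial in $P_y^q$.
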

\begin{proof}
    We apply the simplicial version of Quillens theorem A (Theorem \ref{QuillenA'}). To do so we interpret~$X$ as a simplicial category in the trivial way i.e for every~$n$ the category~$X([n])$ just has identity morphisms. We then define the functor~$F : X_U \rightarrow X$ as sending~$x \in U_{\sigma}$ to~$x \in X$. We must check that for every~$n \geq 0$ and every~$x \in X([n])$, the simplicial category~$F/x$ is contractible. This follows because for every~$m\geq 0$ the category 
    \begin{equation*}
        (F/x)_{[m]} = \bigsqcup_{\theta : [m] \rightarrow [n]} F_{[m]}/\theta^*x
    \end{equation*}
    is contractible, as~$F_{[m]}/\theta^*x$ has a terminal object given by~$(\theta^*x)_{\sigma}$ for~$\sigma=\{i \in I | \theta^*x \in U_{i}([m])\}$. 
\end{proof}
Let us return for a moment to the category~$\catname{U}$ from above. We may interpret it as a simplicial category in the trivial way i.e
    \[
        \catname{U} =
            \begin{tikzcd}
                \bigsqcup_{\sigma \subseteq I} \ast & \bigsqcup_{\sigma' \subseteq \sigma} \ast \arrow[l,shift right] \arrow[l,shift left]
            \end{tikzcd}
    \] 
and get an obvious simplicial functor~$X_{\mathcal{U}} \to \catname{U}$. 
For a cover~$\{U_i \hookrightarrow X\}_{i \in I}$, where all intersections~$U_{\sigma}$ are either empty or contractible we have as a direct consequence of Lemma \ref{lemma:SimpCatLevelwiseWeakEq} that this map induces a homotopy equivalence
    \[
        \diag (\nerve \circ X_U) \simeq \nerve \catname{U}
    \]
Combining this observation with Proposition \ref{SimplicialSegal} we obtain a simplicial set version of the nerve lemma. 
\begin{corollary}\label{simplicialnerve}
    Let~$\mathcal{U}=\{U_i \hookrightarrow X\}_{i \in I}$ be a cover of the simplicial set~$X$, such that all intersections~$U_{\sigma}$ as defined above are either empty or contractible. Then
    \[
        \nerve \catname{U} \simeq X
    \]  
\end{corollary}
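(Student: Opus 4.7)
The plan is to derive the statement as a direct combination of two weak equivalences. Proposition \ref{SimplicialSegal} supplies the natural weak equivalence $\diag(\nerve \circ X_{\mathcal{U}}) \xrightarrow{\sim} X$, and the paragraph immediately preceding the corollary sketches a weak equivalence $\diag(\nerve \circ X_{\mathcal{U}}) \simeq \nerve \catname{U}$ under the hypothesis that all non-empty intersections $U_\sigma$ are contractible. Chaining these produces the zigzag $X \simeq \diag(\nerve \circ X_{\mathcal{U}}) \simeq \nerve \catname{U}$, which is the claim.

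The ingredient that merits a short verification is the second weak equivalence, induced by the simplicial functor $X_{\mathcal{U}} \to \catname{U}$. I would invoke Lemma \ref{lemma:SimpCatLevelwiseWeakEq} on the resulting map of bisimplicial sets $\nerve \circ X_{\mathcal{U}} \to \nerve \circ \catname{U}$. A bisimplicial set, viewed as a functor $\Delta\op \times \Delta\op \to \Set$, is symmetric in its two coordinates and the diagonal respects this symmetry, so the lemma may be applied in either direction. I would apply it along the nerve coordinate: at nerve-level $k$, the source bisimplicial set yields the simplicial set $\bigsqcup U_{\sigma_0}$ and the target yields the constant simplicial set $\bigsqcup \ast$, both indexed by chains $\sigma_0 \supseteq \cdots \supseteq \sigma_k$ with $U_{\sigma_0} \neq \emptyset$. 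The induced map restricts on each summand to the collapse $U_{\sigma_0} \to \ast$, which is a weak equivalence by contractibility of $U_{\sigma_0}$. A coproduct of weak equivalences is a weak equivalence, so Lemma \ref{lemma:SimpCatLevelwiseWeakEq} applies, and $\diag(\nerve \circ \catname{U}) = \nerve \catname{U}$ because this bisimplicial set is constant in the outer coordinate.

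The only real subtlety is picking the right direction for the levelwise check. The ostensibly more natural approach of checking that each $\nerve(X_{\mathcal{U}}([n])) \to \nerve \catname{U}$ is a weak equivalence for fixed $n$ runs into trouble: the fiber of $X_{\mathcal{U}}([n]) \to \catname{U}$ over $\sigma$ unpacks to a category whose nerve is equivalent to the discrete set $U_\sigma([n])$, generally not contractible, so Quillen's Theorem A does not directly apply. The hypothesis we have is contractibility of $U_\sigma$ as a simplicial set, not of its $n$-simplex sets at any fixed level, so it is the nerve-direction check that actually exploits the hypothesis correctly.
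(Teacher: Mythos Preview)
Your proof is correct and follows exactly the paper's approach: combine Proposition~\ref{SimplicialSegal} with the weak equivalence $\diag(\nerve\circ X_{\mathcal{U}})\simeq\nerve\catname{U}$ obtained from Lemma~\ref{lemma:SimpCatLevelwiseWeakEq}. Your explicit identification of the correct bisimplicial direction for the levelwise check---along the nerve coordinate, where the map is a coproduct of collapses $U_{\sigma_0}\to\ast$---and your observation that the other direction does not use the contractibility hypothesis, are genuine clarifications of what the paper leaves as a ``direct consequence.''
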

As for open covers of topological spaces we may from now on also call simplicial covers with contractible or empty intersections \textit{good} covers.\\
Now let~$f : X \rightarrow Y$ be a simplicial map and~$\mathcal{U}=\{ U_i \hookrightarrow X\}_{i \in I}$ and~$\mathcal{V}=\{V_j \hookrightarrow Y\}_{j \in J}$ covers such that for every~$i \in I$ there is some~$p(i) \in J$ so that for every~$k \geq 0$ we have~$f_{[k]}U_i([k]) \subseteq V_{p(i)}([k])$. We may fix such a choice for a pairing i.e an appropriate map~$p : I \rightarrow J$ and obtain an induced map of simplicial categories 
\begin{equation*}
    \tilde{f}^p : X_{\mathcal{U}} \rightarrow Y_{\mathcal{V}}
\end{equation*}
that sends~$x \in U_{\sigma}([k])$ to $f_{[k]}(x) \in V_{p(\sigma)}([k])$. Note that for two choices of pairing~$p_0 : I \rightarrow J$ and $p_1: I \rightarrow J$ we obtain different but homotopic maps. This follows from an application of Lemma \ref{lemma:NatTrafoHomotopy} to the zigzag of natural transformations into~$\tilde{f}^{p_0p_1} \from X_{\mathcal{U}} \to Y_{\mathcal{V}}$, where~$\tilde{f}^{p_0p_1}_{[k]}(x \in U_{\sigma}([k])=(f_{[k]}(x) \in V_{p_0(\sigma)p_1(\sigma)})$. We will from now on omit mentioning the choice of pairing and just denote the induced map on simplicial categories by~$\tilde{f} : X_U \rightarrow Y_V$. Proposition \ref{SimplicialSegal} may then be strengthened to the following functorial formulation. 
\begin{theorem}\label{theorem:functorialsegal}
    Given two covers of simplicial sets~$\mathcal{U}=\{ U_{\alpha} \hookrightarrow X\}_{\alpha \in I}$ and~$\mathcal{V}=\{V_{\beta} \hookrightarrow Y\}_{\beta \in J}$ and a simplicial map~$f : X \rightarrow Y$, compatible with the covers~$\mathcal{U}$ and $\mathcal{V}$ as above, the diagram
    \begin{equation*}
        \begin{tikzcd}
            \nerve \operatorname{U} \arrow[d] & \diag (\nerve \circ X_{\mathcal{U}}) \arrow[d,"\diag \nerve \tilde{f}"] \arrow[r] \arrow[l] & X \arrow[d,"f"] \\ 
            \nerve \operatorname{V} & \diag (\nerve \circ Y_{\mathcal{V}}) \arrow[r] \arrow[l] & Y
        \end{tikzcd}
    \end{equation*}
    commutes and the vertical maps are weak equivalences. 
\end{theorem}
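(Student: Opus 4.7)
The plan is to verify two claims: first, the commutativity of both squares in the diagram, and second, that the relevant maps into the middle column are weak equivalences. For the latter there is essentially nothing new — both horizontal arrows $\diag(\nerve \circ X_{\mathcal{U}}) \to X$ and $\diag(\nerve \circ Y_{\mathcal{V}}) \to Y$ are weak equivalences by Proposition \ref{SimplicialSegal} applied to $(X,\mathcal{U})$ and $(Y,\mathcal{V})$ independently. So the substance of the theorem lies in showing that the two squares commute, naturally in the chosen pairing $p \from I \to J$.

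Before checking commutativity I would unpack $\tilde f$ explicitly. On $m$-simplices in the object-component labelled by $\sigma$ it sends $x \in U_\sigma([m])$ to $f_{[m]}(x) \in V_{p(\sigma)}([m])$, and it sends a morphism in the component labelled by $\sigma' \subseteq \sigma$ to the analogous morphism labelled by $p(\sigma') \subseteq p(\sigma)$. The compatibility hypothesis $f_{[m]}U_\alpha([m]) \subseteq V_{p(\alpha)}([m])$ is precisely what is needed for these assignments to land in the correct components and to commute with the source, target, identity and composition maps, so $\tilde f$ is a well-defined map of simplicial categories.

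For the right-hand square the horizontal maps are induced by the forgetful simplicial functors $X_{\mathcal{U}} \to X$ and $Y_{\mathcal{V}} \to Y$ collapsing each component $U_\sigma$ onto its image in $X$ (with $X$ and $Y$ regarded as discrete simplicial categories in the sense of the proof of Proposition \ref{SimplicialSegal}). Commutativity then reduces to the tautology that $\tilde f$ acts as $f$ once the labels $\sigma$ are forgotten. For the left-hand square the horizontal maps are induced by the simplicial functors $X_{\mathcal{U}} \to \catname{U}$ and $Y_{\mathcal{V}} \to \catname{V}$ that collapse each $U_\sigma$ to the point component indexed by $\sigma$, while the left vertical map $\nerve \catname{U} \to \nerve \catname{V}$ is the nerve of the poset map $\sigma \mapsto p(\sigma)$; commutativity is again direct from the definition of $\tilde f$ on labels.

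The main subtlety to handle carefully is that $\tilde f$ depends on the choice of pairing $p$, so strict commutativity in the left square presupposes that the vertical map $\nerve \catname{U} \to \nerve \catname{V}$ is built from the \emph{same} $p$ used to define $\tilde f$. That the induced map on $\diag(\nerve \circ -)$ is well-defined up to homotopy, independent of $p$, is the argument sketched just before the theorem statement: any two pairings $p_0,p_1$ sit in a zigzag of natural transformations through the auxiliary $\tilde f^{p_0 p_1}$, and Lemma \ref{lemma:NatTrafoHomotopy} converts this into a simplicial homotopy after applying $\diag\,\nerve$.
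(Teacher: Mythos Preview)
Your argument is correct and coincides with the paper's approach. The paper in fact gives no explicit proof of this theorem: it is stated immediately after the discussion of the induced map $\tilde f$ and its independence of the pairing $p$, and is evidently meant to follow directly from Proposition \ref{SimplicialSegal} (for the weak equivalences $\diag(\nerve\circ X_{\mathcal U})\to X$ and $\diag(\nerve\circ Y_{\mathcal V})\to Y$) together with the definitions just unpacked (for commutativity of the two squares and the homotopy-independence of $p$). Your write-up is exactly this unpacking. One remark: the phrase ``the vertical maps are weak equivalences'' in the statement is a slip for the right-hand horizontal maps, since $f$ is arbitrary; you have read it in the intended way.
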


\section{Extended Dowker duality} \label{section:ExtendedDowker}
\paragraph{Ordinary Dowker duality} Let $A \from I \times J \to \{0,1\}$ be a relation. For every~$\sigma \subseteq I$ we may define the set of elements satisfying~$A$ with all elements of~$\sigma$:
\[
    J^A_{\sigma} = \{j \in J \mid \forall i \in \sigma A(i,j)=1\}.
\] 
Similarly for every $\tau \subseteq J$ we get
\[ 
    I^A_{\tau} = \{i \in I | \forall j \in \tau :  A(i,j)=1\}.
\]
Then we may define two poset categories~$\catname{R}^A$ and~$\catname{C}^A$ with sets of objects given as 
\begin{align*}
    & \ob \catname{R}^A = \{\sigma \subset I \mid J^A_{\sigma} \neq \emptyset\} \\
    & \ob \catname{C}^A = \{\tau \subset J \mid I^A_{\tau} \neq \emptyset\}.
\end{align*}
These are the poset categories of the row and column complexes of~$A$ and consequently their classifying spaces are homotopy equivalent to the respective realizations~$\cspace \catname{R}^A \simeq |R(A)|$ and~$\cspace \catname{C}^A \simeq |C(A)|$. Let us call~$\catname{R}^A$ and~$\catname{C}^A$ the relations \textit{row and column categories}.  As a warmup we reprove Dowker duality. 
\begin{theorem}
    The simplicial sets~$R^A= \nerve \catname{R}^A$ and~$C^A = \nerve \catname{C}^A$ are weakly equivalent. 
\end{theorem}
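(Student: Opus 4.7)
The strategy is a direct combinatorial analogue of Björner's argument sketched in Figure \ref{Fig:DowkerProof}: cover $R^A$ by contractible pieces whose intersection pattern is exactly $C^A$, then apply the simplicial nerve lemma (Corollary \ref{simplicialnerve}). The categorical setup lets me do this without leaving the world of simplicial sets, so no thickenings or realizations are needed.

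For each $j \in J$ define $U_j \subseteq R^A$ to be the nerve of the full subposet $\{\sigma \in \ob \catname{R}^A \mid \sigma \subseteq I^A_{\{j\}}\}$ of $\catname{R}^A$. This is a sub-simplicial set of $R^A$: its $n$-simplices are the chains $\sigma_0 \subseteq \cdots \subseteq \sigma_n$ whose maximum $\sigma_n$ is contained in $I^A_{\{j\}}$, a condition preserved by face and degeneracy maps.

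I would then verify the three hypotheses of Corollary \ref{simplicialnerve}. First, $\mathcal{U} = \{U_j\}_{j \in J}$ covers $R^A$: given a chain $\sigma_0 \subseteq \cdots \subseteq \sigma_n$ in $\catname{R}^A$, the fact that $\sigma_n \in \ob \catname{R}^A$ supplies a witness $j \in J^A_{\sigma_n}$, and then $\sigma_n \subseteq I^A_{\{j\}}$ places the whole chain in $U_j$. Second, for any $\tau \subseteq J$, the identity $\bigcap_{j \in \tau} I^A_{\{j\}} = I^A_\tau$ gives
\[
U_\tau = \nerve\{\sigma \in \ob \catname{R}^A \mid \sigma \subseteq I^A_\tau\}.
\]
If $I^A_\tau = \emptyset$ this subposet is empty, so $U_\tau = \emptyset$. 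Otherwise $I^A_\tau$ itself lies in $\ob \catname{R}^A$ (witnessed by any $j \in \tau$) and is the maximum of the subposet, so $U_\tau$ is the nerve of a poset with a terminal object and hence contractible. Thus $\mathcal{U}$ is a good cover.

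Third, I would identify the resulting indexing poset $\catname{U}$ with $\catname{C}^A$: by construction $\ob \catname{U} = \{\tau \subseteq J \mid I^A_\tau \neq \emptyset\} = \ob \catname{C}^A$, and both carry the inclusion ordering on subsets of $J$, at most up to order reversal which leaves the nerve invariant. Corollary \ref{simplicialnerve} then yields $R^A \simeq \nerve \catname{U} \cong C^A$. There is no substantive obstacle; the only places to slip up are the bookkeeping ones — checking that $U_j$ is honestly a sub-simplicial set and keeping the poset orientation consistent between $\catname{U}$ and $\catname{C}^A$ — and these are handled immediately by the definitions.
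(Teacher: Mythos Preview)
Your proof is correct and is essentially the same as the paper's: you define the same cover $\mathcal{U}=\{U_j=\nerve \catname{R}^A_{I^A_j}\}_{j\in J}$, verify it is good via the terminal object $I^A_\tau$, identify $\catname{U}$ with $\catname{C}^A$, and invoke Corollary~\ref{simplicialnerve}. Your write-up is slightly more detailed (explicit covering check, sub-simplicial set verification, remark on orientation), but the argument is identical in substance.
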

\begin{proof}
    For every~$\tau \subseteq J$ we may form the full subcategory~$\catname{R}^A_{I_{\tau}}$ of~$\catname{R}^A$ with objects 
    \[
        \ob \catname{R}_{I_{\tau}}^A = \{\sigma \subset I_{\tau} \mid J^A_{\sigma} \neq \emptyset\}.
    \]
    Either~$I_{\tau}$ is empty and thus the subcategory~$\catname{R}^A_{I_{\tau}}$ is as well, or it is a terminal object. Thus all overlaps in the simplicial cover given as
    \[
    \mathcal{U} = \{U_j = \nerve \catname{R}^A_{I_{j}} \hookrightarrow R^A\}_{j \in J}
    \]
     are either empty or contractible. On the other hand the poset category~$\catname{U}$ with objects
     \[
       \ob \catname{U} = \{\tau \subseteq J \mid U_{\tau}\}
    \]
    is presisely~$\catname{C}^A$ and thus Corollary \ref{simplicialnerve} gives us 
    \[
        R^A \simeq \nerve \catname{U} = C^A
    \]
\end{proof}
\paragraph{Two covers}
Consider now two simplicial covers of the same space:~$\mathcal{U}=\{U_i \hookrightarrow X\}_{i \in I}$ and~$\mathcal{V}=\{V_j \hookrightarrow X\}_{j \in J}$. We define the following simplicial category 
\[ \catname{U_V} =\left[
\begin{tikzcd}
    \bigsqcup_{\sigma \in \ob \catname{U}} \nerve \catname{V}_{\sigma} & \bigsqcup_{(\sigma' \subseteq \sigma) \in \mor \catname{U}} \nerve \catname{V}_{\sigma} \arrow[l, shift right, "\iota"'] \arrow[l, shift left, "\operatorname{id}"]
\end{tikzcd}\right],
\]
where~$\catname{U}$ denotes the poset category with objects~$\{\sigma \subseteq I \mid U_{\sigma} \neq \emptyset\}$ and~$\catname{V}_{\sigma}$ the poset category with objects~$\{\tau \subseteq J \mid U_{\sigma} \cap V_{\tau} \neq \emptyset\}$.
\begin{lemma} \label{lemma:TwoCovers}
    Let~$\mathcal{U} = \{U_i \hookrightarrow X\}_{i \in I}$ and~$\mathcal{V} = \{V_j \hookrightarrow X\}_{j \in J}$ be two simplicial covers such that for all~$\sigma \subseteq I$ and~$\tau \subseteq J$ the intersection~$U_{\sigma} \cap V_{\tau}$ is either empty or contractible. Then
    \[
    \diag (\nerve \circ \catname{U_V}) \simeq X.
    \]
\end{lemma}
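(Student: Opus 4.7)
The plan is to reduce the two-cover statement to the single-cover case (Proposition \ref{SimplicialSegal}), using the hypothesis on the doubled intersections $U_\sigma \cap V_\tau$. Intuitively, $\catname{U_V}$ is $X_\mathcal{U}$ with each fibre $U_\sigma$ replaced by $\nerve \catname{V}_\sigma$; the task is to show that the hypothesis makes this replacement a termwise weak equivalence and then to assemble the result.

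First I would observe that for each $\sigma \in \ob \catname{U}$ the restricted family $\{U_\sigma \cap V_j \hookrightarrow U_\sigma\}_{j \in J}$ is a good simplicial cover of $U_\sigma$: its $\tau$-indexed intersections are precisely $U_\sigma \cap V_\tau$, which are empty or contractible by assumption. Applying Corollary \ref{simplicialnerve} then yields $\nerve \catname{V}_\sigma \simeq U_\sigma$, and Theorem \ref{theorem:functorialsegal} upgrades this to a zigzag of weak equivalences $\nerve \catname{V}_\sigma \xleftarrow{\sim} \diag(\nerve \circ (U_\sigma)_{\mathcal{V}|_{U_\sigma}}) \xrightarrow{\sim} U_\sigma$ natural in $\sigma$ with respect to the inclusions $U_\sigma \hookrightarrow U_{\sigma'}$ for $\sigma' \subseteq \sigma$.

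Next I would bridge $\catname{U_V}$ and $X_\mathcal{U}$ via a simplicial category $\catname{Z}$ built from $X_\mathcal{U}$ by replacing each $U_\sigma$ with the pointwise $(U_\sigma)_{\mathcal{V}|_{U_\sigma}}$; concretely $\catname{Z}([n])$ has objects the triples $(\sigma, \tau, x)$ with $x \in (U_\sigma \cap V_\tau)([n])$, together with morphisms coming from inclusions in both $\sigma$ and $\tau$. There are then forgetful simplicial functors $\catname{Z} \to X_\mathcal{U}$ (retaining $(\sigma, x)$) and $\catname{Z} \to \catname{U_V}$ (retaining $\sigma$ together with an $n$-simplex of $\nerve \catname{V}_\sigma$ assembled from the $\tau$-data). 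Combining Proposition \ref{SimplicialSegal} with the naturality established in the first step produces the desired zigzag
\[
\diag(\nerve \circ \catname{U_V}) \xleftarrow{\sim} \diag(\nerve \circ \catname{Z}) \xrightarrow{\sim} \diag(\nerve \circ X_\mathcal{U}) \xrightarrow{\sim} X.
\]

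The main obstacle is the left arrow $\diag(\nerve \circ \catname{Z}) \xrightarrow{\sim} \diag(\nerve \circ \catname{U_V})$: because morphisms of $\catname{U_V}$ only change the $\sigma$-component (the inclusions $\nerve \catname{V}_\sigma \hookrightarrow \nerve \catname{V}_{\sigma'}$ being the identity on elements), defining the functor $\catname{Z} \to \catname{U_V}$ compatibly with the $\tau$-morphisms already present in $\catname{Z}$ requires care, and its weak-equivalence property must be checked via Theorem \ref{QuillenA'}. The fibres over an object $(\sigma, \alpha) \in \ob \catname{U_V}([n])$ are controlled by the witness spaces $U_\sigma \cap V_{\tau_0}$, whose hypothesized contractibility is precisely what makes the fibre analysis succeed.
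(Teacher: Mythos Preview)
Your overall strategy---build an intermediate simplicial category interpolating between $\catname{U_V}$ and $X_{\mathcal U}$ using the fibrewise zigzag $\nerve\catname V_\sigma\leftarrow\diag(\nerve\circ(U_\sigma)_{\mathcal V_\sigma})\rightarrow U_\sigma$, then finish with Proposition~\ref{SimplicialSegal}---is exactly the paper's. The paper's proof is just the two sentences: write down that zigzag, then ``apply Lemma~\ref{lemma:SimpCatLevelwiseWeakEq} followed by Proposition~\ref{SimplicialSegal}.''

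Where you diverge is in the description of the intermediate and the tool used for the left arrow. Your $\catname Z$ with objects $(\sigma,\tau,x)$ and morphisms in \emph{both} the $\sigma$- and $\tau$-directions does not map cleanly to $\catname{U_V}$, since the latter has morphisms only in the $\sigma$-direction; this is precisely the obstacle you flag, and it is self-inflicted. The paper's implicit intermediate is instead the simplicial category whose fibre over $\sigma$ is the \emph{simplicial set} $\diag(\nerve\circ(U_\sigma)_{\mathcal V_\sigma})$, with morphisms only in the $\sigma$-direction (identity and inclusion, as for $\catname{U_V}$ and $X_{\mathcal U}$). The maps to $\catname{U_V}$ and to $X_{\mathcal U}$ are then the obvious fibrewise ones, and no construction problem arises. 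For the weak-equivalence check, Theorem~\ref{QuillenA'} is unnecessary: in the nerve direction of the bisimplicial set, level $[m]$ is the coproduct $\bigsqcup_{\sigma_0\supseteq\cdots\supseteq\sigma_m}(-)_{\sigma_0}$, so the maps are coproducts of the fibrewise weak equivalences, and Lemma~\ref{lemma:SimpCatLevelwiseWeakEq} applies directly.
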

\begin{proof}
For every~$\sigma \subseteq I$ the collection~$\mathcal{V_{\sigma}}=
\{U_{\sigma} \cap V_j\}_{j \in J}$ is a good simplicial cover of~$U_{\sigma}$. 
We thus get a zigzag of weak equivalences 
\[
\nerve \catname{V}_{\sigma} \overset{\sim}{\leftarrow} \diag (\nerve \circ (U_{\sigma})_{\mathcal{V}_{\sigma}}) \overset{\sim}{\rightarrow} U_{\sigma}. 
\]
Applying Lemma \ref{lemma:SimpCatLevelwiseWeakEq} followed by Proposition \ref{SimplicialSegal} finishes the proof. 
\end{proof}
\paragraph{Extended Dowker duality}
Consider now two relations~$A \from I \times J \to \{0,1\}$ and~$B \from J \times K \to \{0,1\}$. We will need the following subsets defined for all~$\sigma \subseteq I$, $\tau \subseteq J$ and $\rho \subseteq K$: 
\begin{align*}
    & I^A_{\tau} = \{i \in I | \forall j \in \tau :  A(i,j)=1\}, \quad J^A_{\sigma} = \{j \in J | \forall i \in \sigma :  A(i,j)=1\}, \\
    & J^B_{\rho} = \{j \in J | \forall k \in \rho :  B(i,j)=1\}, \quad K^B_{\tau} = \{k \in K | \forall k \in \rho :  B(i,j)=1\}.
\end{align*}
For every~$\sigma \in \ob \catname{R}^A$ we may restrict the relation~$B|_{J^A_{\sigma}}$ and denote the corresponding row category by~$\catname{R}^B_{J_{\sigma}}$. Note that for~$\sigma' \subseteq \sigma$ we have~$\catname{R}^B_{J_{\sigma}} \hookrightarrow \catname{R}^B_{J_{\sigma'}}$.  We may assemble from this information a simplicial category
\begin{equation*}
    \catname{R}^{AB} = \begin{tikzcd}
        \bigsqcup_{\rho \in \ob \catname{R^A}} \nerve \catname{R}^B_{J_{\sigma}} & \bigsqcup_{(\sigma' \subseteq \sigma)\in \mor \catname{R^A}} \nerve \catname{R}^B_{J_{\sigma}} \arrow[l,shift left] \arrow[l,shift right]
    \end{tikzcd}
\end{equation*}
The source and target maps are given by the obvious inclusion and identity functor. Similarly we may form a simplicial category
\begin{equation*}
    \catname{C}^{BA} = \begin{tikzcd}
        \bigsqcup_{\rho \in \ob \catname{C^B}} \nerve \catname{C}^A_{J_{\rho}} & \bigsqcup_{(\rho' \subseteq \rho) \in \mor \catname{C^B}} \nerve \catname{C}^A_{J_{\rho}} \arrow[l,shift left] \arrow[l,shift right]
    \end{tikzcd}.
\end{equation*}
We will denote~$R^{AB} = \diag (\nerve \circ \catname{R}^{AB})$ and~$C^{BA} = \diag (\nerve \circ \catname{C}^{BA})$ 
\begin{remark} \label{remark:ssettop}
    In the introduction we stated our contributions in terms of topological categories~$\catname{R}(A,B)$ and~$\catname{C}(A,B)$. We may apply the realization functor levelwise to the here defined~$(\nerve \circ \catname{R}^{AB})$ and~$(\nerve \circ \catname{C}^{AB})$. This results in simplicial spaces, degree wise weakly equivalent to~$\catname{R}(A,B)$ and~$\catname{C}(A,B)$. Since all the spaces in question are~$CW$-complexes all the simplicial spaces are good. According to Proposition \ref{proposition:DegreewiseWeakTop} we get induced natural weak homotopy equivalences between the realizations. All the results we prove in this and the following section about the simplicial categories~$\catname{R}^{AB}$ and~$\catname{C}^{AB}$ and their realizations translate immediately to the results stated in the introduction. As a bonus we gain insight into the combinatorics which may be required for an algorithmic implementation of our methods. 
\end{remark}
\begin{proposition}\label{Proposition:ExtendedDowker}
    The simplicial sets~$R^{AB}$ and~$C^{BA}$ are weakly homotopy equivalent. 
\end{proposition}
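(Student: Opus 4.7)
The plan is to apply Lemma \ref{lemma:TwoCovers} to $R^{AB}$ using two covers indexed by $K$ and $I$ respectively. For each $k \in K$, I will define $U_k \subseteq R^{AB}$ to consist of those simplices $(\sigma_0 \supseteq \cdots \supseteq \sigma_n, \tau_0 \subseteq \cdots \subseteq \tau_n)$ with $\tau_n \subseteq J^B_{\{k\}}$; since every simplex of $R^{AB}$ satisfies $K^B_{\tau_n} \neq \emptyset$, the family $\{U_k\}_{k\in K}$ covers $R^{AB}$. For each $i \in I$, I will define $V_i \subseteq R^{AB}$ to consist of simplices with $i \in \sigma_n$, so that $\{V_i\}_{i \in I}$ covers $R^{AB}$ as well.

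The first key step is verifying that every pairwise intersection $U_\rho \cap V_\sigma$ is either empty or contractible. I will view $U_\rho \cap V_\sigma$ as the diagonal nerve of the sub-simplicial-category of $\catname{R}^{AB}$ obtained by restricting the outer poset $\catname{R}^A$ to those $\sigma'$ with $\sigma \subseteq \sigma'$ and by replacing each fiber $\catname{R}^B_{J_{\sigma'}}$ by $\catname{R}^B_{J_{\sigma'} \cap J^B_\rho}$. The latter poset, when non-empty, admits $J_{\sigma'} \cap J^B_\rho$ as terminal object, so its nerve is contractible. An application of Lemma \ref{lemma:SimpCatLevelwiseWeakEq} then collapses the $B$-fibers and identifies $U_\rho \cap V_\sigma$ with the nerve of $\{\sigma' \in \catname{R}^A : \sigma \subseteq \sigma',\ J_{\sigma'} \cap J^B_\rho \neq \emptyset\}$, which, when non-empty, has $\sigma$ as a minimum and hence contractible nerve.

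Granted this, Lemma \ref{lemma:TwoCovers} produces a weak equivalence $\diag(\nerve \circ \catname{U_V}) \simeq R^{AB}$. The simplicial category $\catname{U_V}$ has over each $\rho$ (with $U_\rho \neq \emptyset$) the fiber $\nerve \catname{R}^{A|_\rho}$, where $\catname{R}^{A|_\rho}$ denotes the row category of the restricted relation $A|_{I \times J^B_\rho}$; the source/target structure is given by the evident inclusions $\catname{R}^{A|_\rho} \hookrightarrow \catname{R}^{A|_{\rho'}}$ for $\rho' \subseteq \rho$. Ordinary Dowker duality, applied fiberwise, supplies a weak equivalence $\nerve \catname{R}^{A|_\rho} \simeq \nerve \catname{C}^A_{J^B_\rho}$; the right-hand side is precisely the $\rho$-fiber of $\catname{C}^{BA}$. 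A level-wise application of Lemma \ref{lemma:SimpCatLevelwiseWeakEq} will then give $\diag(\nerve \circ \catname{U_V}) \simeq C^{BA}$, completing the zigzag $R^{AB} \simeq \diag(\nerve \circ \catname{U_V}) \simeq C^{BA}$.

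The principal obstacle is arranging the Dowker duality equivalence $\nerve \catname{R}^{A|_\rho} \simeq \nerve \catname{C}^A_{J^B_\rho}$ so that it is natural in $\rho$ and assembles into a zigzag of maps of simplicial categories over $\catname{C}^B$. Since ordinary Dowker duality is itself proven through a zigzag going through the simplicial Segal construction, this is not automatic. I expect the cleanest route is to replace each $\nerve \catname{R}^{A|_\rho}$ by the intermediate object $\diag(\nerve \circ (\nerve\catname{R}^{A|_\rho})_{\mathcal{U}^\rho})$ for the natural cover $\mathcal{U}^\rho = \{\nerve \catname{R}^{A|_\rho}_{I^A_{\{j\}}}\}_{j \in J^B_\rho}$; the inclusions $J^B_\rho \subseteq J^B_{\rho'}$ then induce compatible maps between intermediates via the functorial nerve lemma (Theorem \ref{theorem:functorialsegal}), yielding the required naturality.
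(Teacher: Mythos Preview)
Your argument is correct, but it takes a longer route than the paper's. The difference lies entirely in the second cover: you index $\mathcal{V}$ by $I$, whereas the paper indexes it by the \emph{middle} set $J$, setting
\[
V_j=\diag\Bigl(\nerve\circ\bigl[\textstyle\bigsqcup_{\sigma\in\ob\catname{R}^A_{I^A_j}}\nerve\catname{R}^B_{J^A_\sigma}\ \leftleftarrows\ \cdots\bigr]\Bigr).
\]
With this choice one computes that $U_\rho\cap V_\tau\neq\emptyset$ iff $\tau\subseteq J^B_\rho$ and $I^A_\tau\neq\emptyset$, i.e.\ iff $\tau\in\ob\catname{C}^A_{J^B_\rho}$. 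Hence $\catname{V}_\rho=\catname{C}^A_{J^B_\rho}$ \emph{on the nose}, and $\catname{U_V}$ is literally $\catname{C}^{BA}$; Lemma~\ref{lemma:TwoCovers} then finishes the proof in one line. Your choice instead yields $\catname{V}_\rho=\catname{R}^{A|_{I\times J^B_\rho}}$, the \emph{row} category of the restricted relation, so you must still run a fiberwise Dowker duality and, as you correctly flag, arrange it naturally in $\rho$ via the intermediate Segal object. That does work, but it is precisely the extra zigzag that the paper's indexing sidesteps. Both proofs apply Lemma~\ref{lemma:TwoCovers} to the same $\mathcal{U}$; the paper's $\mathcal{V}$ simply lands you at $C^{BA}$ by an equality rather than a further equivalence.
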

\begin{proof}
We will define two simplicial covers~$\mathcal{U}=\{U_k \hookrightarrow R^{AB}\}_{k \in K}$ and~$\mathcal{V}=\{V_j \hookrightarrow R^{AB}\}_{j \in J}$ that satisfy the conditions of Lemma \ref{lemma:TwoCovers} and furthermore have the property that~$\diag (\nerve \circ \catname{U_V})=C^{AB}$ and thus 
\[
R^{AB} \simeq \diag (\nerve \circ \catname{U_V}) = C^{AB}.
\]
The cover~$\mathcal{U}$ is obtained by forming for every~$k \in K$ the simplicial set
\[
    U_k = \diag \left(\nerve \circ \left[\begin{tikzcd} \bigsqcup_{\sigma \in \ob \catname{R}^A} \nerve \catname{R}^B_{J^A_{\sigma} \cap J^B_k} & \bigsqcup_{(\sigma' \subseteq \sigma) \in \mor \catname{R}^A} \nerve \catname{R}^B_{J^A_{\sigma} \cap J^B_k} \arrow[l,shift left] \arrow[l, shift right] \end{tikzcd}\right]\right).
\]
Indeed this gives us a cover. If~$\tau_0 \subset \ldots \subseteq \tau_n$ is an $n$-simplex of~$\nerve \catname{R}^{B}_{J^A_{\sigma}}$ it is one in~$U_{\sigma}$ for every~$k \in K^B_{\tau_n}$. 
Next we define~$\mathcal{V}$ by forming for every~$j \in J$ a simplicial set
\[
    V_j = \diag \left(\nerve \circ \left[\begin{tikzcd}
        \bigsqcup_{\sigma \in \ob \catname{R}_{I^A_{j}}^A} \nerve \catname{R}^B_{J^A_{\sigma}} & \bigsqcup_{(\sigma' \subseteq \sigma) \in \mor \catname{R}_{I^A_{j}}^A} \nerve \catname{R}^B_{J^A_{\sigma}} \arrow[l,shift left] \arrow[l, shift right]
    \end{tikzcd} \right]\right).
\]
This is indeed a cover as~$\sigma \subseteq I_j$ for every $j \in J^A_{\sigma}$. 
For all~$\sigma \subseteq I$ and~$\tau \subseteq J$ the intersection~$U_{\sigma} \cap V_{\tau}$ is given as 
\[
    U_{\sigma} \cap V_{\tau} = \diag \left(\nerve \circ \left[\begin{tikzcd}
        \bigsqcup_{\sigma \in \ob \catname{R}_{I^A_{\tau}}^A} \nerve \catname{R}^B_{J^A_{\sigma} \cap J^B_{\rho}} & \bigsqcup_{(\sigma' \subseteq \sigma) \in \mor \catname{R}_{I^A_{\tau}}^A} \nerve \catname{R}^B_{J^A_{\sigma} \cap J^B_{\rho}} \arrow[l,shift left] \arrow[l, shift right]
    \end{tikzcd}\right]\right).
\]
This is non-empty if and only if~$\tau \subseteq J^B_{\rho}$ and~$I^A_{\tau}$ is not empty i.e if and only if~$\tau \in \ob \catname{C}^A_{J^B_{\rho}}$. Thus we have 
\[
\nerve \catname{V}_{\rho} = \nerve \catname{C}^A_{J_{\rho}}
\]
Finally, if~$U_{\sigma} \cap V_{\tau}$ is not empty, then for every~$\sigma \in I^A_{\tau}$ the category~$\catname{R}^B_{J^A_{\sigma}\cap J^B_{\rho}}$ contains the terminal object~$J^A_{\sigma}\cap J^B_{\rho}$ and is thus contractible. Thus~$U_{\rho}\cap V_{\tau} \simeq \nerve \catname{R}^A_{I^A_{\tau}}$, which is contractible as well.  
\end{proof}

We may strengthen this result to a functorial formulation. First consider a simplicial map~$f \from X \to Y$ and simplicial covers~$\mathcal{U} = \{U_i \hookrightarrow X\}_{i \in I}$,~$\mathcal{S} = \{S_i \hookrightarrow X\}_{k \in K}$,~$\mathcal{V} = \{V_j \hookrightarrow Y\}_{j \in J}$ and~$\mathcal{T} = \{T_l \hookrightarrow Y\}_{l \in L}$, such that there are maps~$\alpha \from I \to K$ and~$\beta \from J \to L$ where 
\[
f(U_{i}) \subseteq S_{\alpha(i)} \quad \text{for all }i \in I
\]
and 
\[
f(V_{j}) \subseteq T_{\beta(j)} \quad \text{for all }j \in J
\]
We then get an induced map~$\diag (\nerve \circ \catname{U_V}) \to \diag (\nerve \circ \catname{S_T})$ that does not depend on the choice of~$\alpha$ and~$\beta$ up to homotopy. 
We can augment our proof of Lemma \ref{lemma:TwoCovers} with Theorem \ref{theorem:functorialsegal}.
\begin{lemma}\label{lemma:FunctorialTwoCovers}
    Given a simplicial map~$f \from X \to Y$ and covers~$\mathcal{U,V,S,T}$ as above there is a commutative diagram
    \[
    \begin{tikzcd}
        \diag (\nerve \circ \catname{U_V}) \arrow[r,dash,"\sim"] \arrow[d] & X \arrow[d,"f"] \\
        \diag (\nerve \circ \catname{S_T}) \arrow[r,dash,"\sim"] \arrow[r,dash,"\sim"] & Y
    \end{tikzcd}
    \]
\end{lemma}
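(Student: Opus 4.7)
The plan is to show that every step in the proof of Lemma \ref{lemma:TwoCovers} admits a functorial refinement, and to assemble these refinements into the required commutative square. First I would construct the induced map of simplicial categories $\catname{U_V} \to \catname{S_T}$. Since $f(U_\sigma) \subseteq S_{\alpha(\sigma)}$ follows by intersecting the hypothesis $f(U_i) \subseteq S_{\alpha(i)}$ over $i \in \sigma$, and similarly $f(V_\tau) \subseteq T_{\beta(\tau)}$, the map $\beta$ restricts, for each $\sigma \in \ob \catname{U}$, to a functor $\catname{V}_\sigma \to \catname{T}_{\alpha(\sigma)}$. Because the source and target maps in both $\catname{U_V}$ and $\catname{S_T}$ are given by identity and inclusion, these pointwise functors assemble into a well-defined map of bisimplicial sets; by the discussion preceding the statement, this assembly is independent of the choices of $\alpha$ and $\beta$ up to homotopy.

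Next I would apply Theorem \ref{theorem:functorialsegal} one $\sigma$ at a time. For each $\sigma \in \ob \catname{U}$, the restriction $f|_{U_\sigma} \from U_\sigma \to S_{\alpha(\sigma)}$ is compatible, via $\beta$, with the good covers $\mathcal{V}_\sigma = \{U_\sigma \cap V_j\}_{j \in J}$ and $\mathcal{T}_{\alpha(\sigma)} = \{S_{\alpha(\sigma)} \cap T_l\}_{l \in L}$. Theorem \ref{theorem:functorialsegal} therefore yields a commutative diagram
\[
\begin{tikzcd}
\nerve \catname{V}_\sigma \arrow[d] & \diag(\nerve \circ (U_\sigma)_{\mathcal{V}_\sigma}) \arrow[l,"\sim"'] \arrow[r,"\sim"] \arrow[d] & U_\sigma \arrow[d] \\
\nerve \catname{T}_{\alpha(\sigma)} & \diag(\nerve \circ (S_{\alpha(\sigma)})_{\mathcal{T}_{\alpha(\sigma)}}) \arrow[l,"\sim"'] \arrow[r,"\sim"] & S_{\alpha(\sigma)}
\end{tikzcd}
\]
that is natural in the poset relation $\sigma' \subseteq \sigma$ in $\catname{U}$.

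Packaging these diagrams over $\ob \catname{U}$ and $\mor \catname{U}$ yields a zigzag of maps of bisimplicial sets
\[
\catname{U_V} \xleftarrow{\sim} \catname{W} \xrightarrow{\sim} X_{\mathcal{U}} \xrightarrow{\tilde f} Y_{\mathcal{S}} \xleftarrow{\sim} \catname{W'} \xrightarrow{\sim} \catname{S_T},
\]
where $\catname{W}$ is obtained from $\catname{U_V}$ by replacing each $\nerve \catname{V}_\sigma$ with $\diag(\nerve \circ (U_\sigma)_{\mathcal{V}_\sigma})$, and $\catname{W'}$ is defined analogously for $\catname{S_T}$. By construction the marked maps are levelwise weak equivalences of bisimplicial sets, so by Lemma \ref{lemma:SimpCatLevelwiseWeakEq} they remain weak equivalences after applying $\diag(\nerve \circ -)$. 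Combining this with the natural weak equivalences $\diag(\nerve \circ X_\mathcal{U}) \overset{\sim}{\to} X$ and $\diag(\nerve \circ Y_\mathcal{S}) \overset{\sim}{\to} Y$ supplied by Proposition \ref{SimplicialSegal} gives the required square.

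The main obstacle I anticipate is purely organizational: writing $\catname{W}$ and $\catname{W'}$ out rigorously as simplicial categories, and checking that the relevant simplicial functors line up so that the resulting bisimplicial diagram commutes on the nose rather than merely up to the zigzag constructed above. Since all of the homotopical content has already been isolated in Theorem \ref{theorem:functorialsegal} and Lemma \ref{lemma:SimpCatLevelwiseWeakEq}, no further input is needed beyond this bookkeeping.
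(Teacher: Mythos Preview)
Your proposal is correct and follows exactly the approach the paper indicates: the paper's entire proof is the one-line remark ``We can augment our proof of Lemma \ref{lemma:TwoCovers} with Theorem \ref{theorem:functorialsegal},'' and what you have written is precisely that augmentation spelled out in detail. Your zigzag through the intermediate objects $\catname{W}$ and $\catname{W'}$, together with the levelwise application of Theorem \ref{theorem:functorialsegal} and the final appeal to Lemma \ref{lemma:SimpCatLevelwiseWeakEq} and Proposition \ref{SimplicialSegal}, is the intended argument; the bookkeeping you flag at the end is indeed the only remaining work and involves no new ideas.
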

Let us now go back to relations. A morphism~$a \from A \to A'$ from the relation~$A\from I \times J \to \{0,1\}$ to the relation~$A'\from I' \times J' \to \{0,1\}$ is a pair of maps of sets~$a_0 \from I \to I'$ and~$a_1 \from J \to J'$ such that for all~$(i,j)\in I\times J$ we have that~$A(i,j)=1$ implies~$A'(a_0(i),a_1(i))=1$.
Consider now four relations 
\begin{align*}
    &A \from I \times J \to \{0,1\}, \quad A' \from I' \times J' \to \{0,1\}, \\
    &B \from J \times K \to \{0,1\}, \quad B' \from J' \times K' \to \{0,1\} \\
\end{align*}
and morphisms of relations~$a \from A \to A'$ and~$b \from B \to B'$. 
Then the obvious maps of poset categories induce simplicial maps   
\[
R^{ab} \from R^{AB} \to R^{A'B'}
\]
and 
\[
C^{ab} \from C^{AB} \to C^{A'B'}.
\]
We may prove the following.
\begin{theorem}\label{theorem:FunctorialExtendedDowker}
    For~$A$, $A'$, $B$ and $B'$ relations and~$a \from A \to A'$ and~$b \from B \to B'$ morphisms of relations as above, the induced diagram  
     \[
    \begin{tikzcd}
        R^{AB} \arrow[r,dash,"\sim"] \arrow[d,"R^{ab}"] & C^{AB} \arrow[d,"C^{ab}"] \\
        R^{A'B'} \arrow[r,dash,"\sim"] & C^{A'B'}
    \end{tikzcd}
    \]
        commutes.
    \end{theorem}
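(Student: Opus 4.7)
The plan is to functorialize the strategy from the proof of Proposition \ref{Proposition:ExtendedDowker} by invoking Lemma \ref{lemma:FunctorialTwoCovers}. For the morphisms $R^{ab}$ and $C^{ab}$ to be well-defined the $J$-components of $a$ and $b$ must agree, so throughout I write $a_1 = b_0 \from J \to J'$. Recall the two simplicial covers $\mathcal{U} = \{U_k \hookrightarrow R^{AB}\}_{k \in K}$ and $\mathcal{V} = \{V_j \hookrightarrow R^{AB}\}_{j \in J}$ used in the proof of Proposition \ref{Proposition:ExtendedDowker}, and analogously form covers $\mathcal{U}' = \{U'_{k'}\}_{k' \in K'}$ and $\mathcal{V}' = \{V'_{j'}\}_{j' \in J'}$ of $R^{A'B'}$.

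First I would verify that $R^{ab}$ is compatible with this pair of covers in the sense required by Lemma \ref{lemma:FunctorialTwoCovers}. The morphism properties of $a$ and $b$ immediately give the inclusions $a_0(I^A_\tau) \subseteq I^{A'}_{a_1(\tau)}$, $a_1(J^A_\sigma) \subseteq J^{A'}_{a_0(\sigma)}$, $b_0(J^B_\rho) \subseteq J^{B'}_{b_1(\rho)}$ and $b_1(K^B_\tau) \subseteq K^{B'}_{b_0(\tau)}$. Substituting these into the defining formulas for $U_k$ and $V_j$ produces pairings $\alpha = b_1 \from K \to K'$ and $\beta = a_1 \from J \to J'$ realizing $R^{ab}(U_k) \subseteq U'_{b_1(k)}$ and $R^{ab}(V_j) \subseteq V'_{a_1(j)}$.

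Applying Lemma \ref{lemma:FunctorialTwoCovers} then produces the commutative square
\[
\begin{tikzcd}
    \diag(\nerve \circ \catname{U_V}) \arrow[r,dash,"\sim"] \arrow[d] & R^{AB} \arrow[d,"R^{ab}"] \\
    \diag(\nerve \circ \catname{U'_{V'}}) \arrow[r,dash,"\sim"] & R^{A'B'},
\end{tikzcd}
\]
and combining it with the identifications $\diag(\nerve \circ \catname{U_V}) = C^{AB}$ and $\diag(\nerve \circ \catname{U'_{V'}}) = C^{A'B'}$ established inside the proof of Proposition \ref{Proposition:ExtendedDowker} reduces the problem to checking that the left-hand vertical arrow agrees with $C^{ab}$.

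This last compatibility is the main obstacle, though it is bookkeeping rather than a new homotopical argument. One has to trace through the identification that sends a simplex $\rho$ in the outer $\catname{U}$-index to the corresponding object of $\catname{C}^B$, and a simplex $\tau$ of the fiber $\nerve \catname{V}_\rho = \nerve \catname{C}^A_{J^B_\rho}$ to the corresponding simplex of $\catname{C}^A$, and verify that the map induced by the pairings $b_1$ and $a_1$ sends the pair $(\rho, \tau)$ to $(b_1(\rho), a_1(\tau))$, which is exactly how $C^{ab}$ is defined on simplices. Independence of the induced map from the choice of cover pairing, guaranteed by the discussion preceding Theorem \ref{theorem:functorialsegal}, ensures that no ancillary coherence data needs to be tracked.
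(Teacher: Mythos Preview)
Your proposal is correct and follows exactly the approach of the paper: invoke Lemma \ref{lemma:FunctorialTwoCovers} after checking that the covers constructed in the proof of Proposition \ref{Proposition:ExtendedDowker} are compatible with $R^{ab}$. The paper's proof is a one-liner asserting this compatibility is ``straightforward,'' whereas you have (correctly) spelled out the pairings $\alpha=b_1$, $\beta=a_1$ and the final identification of the induced map with $C^{ab}$.
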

    \begin{proof}
        It is straightforward to see, that the covers in the proof of Proposition \ref{Proposition:ExtendedDowker} are compatible (in the sense discussed in Section \ref{section:simpsegal}) with the induced maps~$R^{ab}$ and~$C^{ab}$ as required for an application of Lemma \ref{lemma:FunctorialTwoCovers}.
    \end{proof}

\section*{Bifiltered Dowker complexes} \label{section:BifiltDow}
Given a relation~$A\from I \times J \to \{0,1\}$, the \textit{total weight} on~$\sigma \subseteq I$ is the cardinality of the set~$J_{\sigma}=\{j \in J \mid \forall i \in \sigma : A(i,j)=1\}$. In other words: it is the number of witnesses to the potential simplex~$\sigma$ in the row complex. Similarly we define total weights for simplices~$\tau \subseteq J$. As already discussed in \cite{robinson2022cosheaf}, one may filter the row and column complexes in terms of these total weights. In our terminology this amounts to forming subcategories for every $k,l \geq 1$ of~$\catname{R}^A$ and~$\catname{C}^A$ with objects 
\[
    \ob \catname{R}^A_k = \{\sigma \subseteq I \mid \# J_{\sigma} \geq k\}
\]
and 
\[
    \ob \catname{C}^A_l = \{\tau \subseteq J \mid \# I_{\tau} \geq l\}.
\]
While~$\nerve \catname{R}^A \simeq \nerve \catname{C}^A$ due to Dowker duality, the filtrations~$\nerve \catname{R}^A_{\bullet}$ and~$\nerve \catname{C}^A_{\bullet}$ can look quite different. A specific relation illustrates this difference. \\
Let~$\mathcal{U}=\{U_i \subseteq X\}_{i \in I}$ be an open cover of the topological space~$X$. We call~$\mathcal{U}$ an~$n$-\textit{fold} cover if every~$x \in X$ is contained in at least $n$ elements of~$\mathcal{U}$. We call~$\mathcal{U}$ an~$m$-\textit{good} cover if for all~$\sigma \subseteq I$ with~$\# \sigma \geq m$ the intersection~$U_{\sigma}$ is either empty or contractible. Now recall the relation~$A_{\mathcal{U}} \from I \times X \to \{0,1\}$ associated to the cover~$\mathcal{U}$ as in the introduction to this paper. 
\begin{proposition}\label{proposition:Rec1}
    Let~$1 \leq m \leq n$. If $\mathcal{U}$ is an~$n$-fold and~$m$-good open cover of~$X$ then for~$m \leq l \leq n$:
    \[
        \operatorname{B} \catname{C}^{A_{\mathcal{U}}}_l \simeq X
    \]
\end{proposition}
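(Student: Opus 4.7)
The plan is to reduce the statement to the classical nerve lemma applied to a derived cover, combined with ordinary Dowker duality on an auxiliary relation. The combinatorial identity doing the heavy lifting throughout is $U_{\sigma_1} \cap \cdots \cap U_{\sigma_k} = U_{\sigma_1 \cup \cdots \cup \sigma_k}$ for subsets $\sigma_i \subseteq I$.

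First I would set $I_l := \{\sigma \subseteq I \mid \#\sigma \geq l,\ U_\sigma \neq \emptyset\}$ and consider the derived family $\mathcal{U}_l := \{U_\sigma\}_{\sigma \in I_l}$. There are two things to check. The family $\mathcal{U}_l$ is an open cover of $X$: because $\mathcal{U}$ is $n$-fold and $l \leq n$, every $x \in X$ lies in at least $n$ of the $U_i$ and hence in some $U_\sigma$ with $\#\sigma = l$. It is moreover a good open cover: any non-empty intersection of its members has the form $U_{\sigma_1 \cup \cdots \cup \sigma_k}$ with $\#(\sigma_1 \cup \cdots \cup \sigma_k) \geq l \geq m$, so it is contractible by the $m$-goodness of $\mathcal{U}$.

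Next I would introduce the auxiliary relation $A' \from I_l \times X \to \{0,1\}$ with $A'(\sigma, x) = 1$ iff $x \in U_\sigma$. Two identifications drive the proof. First, $\catname{R}^{A'}$ is exactly the face poset category of the \v{C}ech nerve of $\mathcal{U}_l$, since a collection $S \subseteq I_l$ admits a common witness in $X$ precisely when $\bigcap_{\sigma \in S} U_\sigma \neq \emptyset$. Second, there is an equality $\catname{C}^{A'} = \catname{C}^{A_\mathcal{U}}_l$ of poset categories, because the condition $I^{A'}_\tau \neq \emptyset$ says exactly that some $\sigma \subseteq I$ with $\#\sigma \geq l$ satisfies $\tau \subseteq U_\sigma$, which in turn is equivalent to $\#\{i \in I \mid \tau \subseteq U_i\} \geq l$.

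Finally I would compose three equivalences. Ordinary Dowker duality (the warm-up theorem of Section~\ref{section:ExtendedDowker}) yields $\cspace \catname{R}^{A'} \simeq \cspace \catname{C}^{A'}$; the second identification gives $\cspace \catname{C}^{A'} = \cspace \catname{C}^{A_\mathcal{U}}_l$; and the first identification together with the classical nerve lemma applied to the good cover $\mathcal{U}_l$ gives $\cspace \catname{R}^{A'} \simeq |R(A')| \simeq X$. No real obstacle arises beyond the bookkeeping in the two identifications; the essence of the argument is that imposing the total-weight threshold $\#I_\tau \geq l$ on the column category is the same as taking the ordinary column category of the relation associated to the derived $l$-fold intersection cover $\mathcal{U}_l$, at which point $m$-goodness promotes $\mathcal{U}_l$ to a good open cover and the classical theory closes the argument.
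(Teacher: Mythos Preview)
Your proof is correct and follows essentially the same route as the paper: introduce the derived cover $\mathcal{U}_l=\{U_\sigma\}_{\#\sigma\geq l}$, observe it is good under the hypotheses, encode it as an auxiliary relation $A'$ on $I_l\times X$, and then combine the nerve lemma on $\catname{R}^{A'}$ with ordinary Dowker duality and the identification $\catname{C}^{A'}=\catname{C}^{A_{\mathcal{U}}}_l$. The only cosmetic differences are that the paper does not impose $U_\sigma\neq\emptyset$ in the definition of $I_l$ (which is immaterial) and phrases the last identification as ``$\cspace\catname{C}^{A_{\mathcal{U}}}_l$ is a subdivision of the column complex of $A'$'' rather than as an equality of poset categories.
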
 
\begin{proof}
    Define~$I_l = \{\sigma \subseteq I \mid \# \sigma \geq l\}$ and $\mathcal{U}_l = \{\cap_{i \in \sigma} U_i\}_{\sigma \in I_l}$. Because $\mathcal{U}$ is a~$n$-fold cover, we know that~$\mathcal{U}_l$ is an open cover of~$X$ for~$l \leq n$. Because~$\mathcal{U}$ is~$m$-good, for~$m \leq l$ the cover~$\mathcal{U}_l$ is good. Define a new relation 
\begin{align*}
    (A_{\mathcal{U}})_l : & I_l \times X \rightarrow \{0,1\} \\ 
    &(A_{\mathcal{U}})_l(\sigma,x) = \begin{cases} 1 \text{ if } x \in U_{\sigma} \\
    0 \text{ else}
    \end{cases}
\end{align*}
The row complex of this relation is precisely the \v{C}ech nerve~$\cechnerve \mathcal{U}_l$ and thus homotopy equivalent to~$X$ due to the nerve lemma. On the other hand $\operatorname{B} \catname{C}^{A_{\mathcal{U}}}_l$ is a subdivision of it's column complex. An application of (ordinary) Dowker duality finishes the proof. 
\end{proof}
On the other hand the filtration~$\cspace \catname{R}^{A_{\mathcal{U}}}_{\bullet}$ may never recover the homotopy type of~$X$. If we are however not given a relation between elements of an open cover and points of~$X$ but another collection of subsets~$\mathcal{V} = \{V_j \subseteq X\}_{j \in J}$ and a relation 
\begin{align*}
    A_{\mathcal{UV}} \from & I \times J \to \{0,1\} \\
    & A_{\mathcal{U V}}(i,j) = \begin{cases}
        1 \text{ if } U_i \cap V_j \neq \emptyset \\
        0 \text{ else}
    \end{cases}
\end{align*}
we can use total weights on the row complex to learn about the homotopy type of~$X$. 
\begin{proposition}\label{proposition:Rec2}
    Let~$\mathcal{U} = \{U_i \subseteq X\}_{i \in I}$ be a good open cover of~$X$. Let~$\mathcal{V} = \{V_j \subseteq X\}_{j \subseteq J}$ be a collection of subsets and~$A_{\mathcal{UV}} \from I \times J \to \{0,1\}$ a relation as above. If for every~$\sigma \subseteq I$ we have~$\#J^{A_{\mathcal{UV}}}_{\sigma} \leq p$ if $U_{\sigma} = \emptyset$ and~$\#J^{A_{\mathcal{UV}}}_{\sigma} \geq q$ if~$U_{\sigma} \neq \emptyset$ then for~$p \geq k \geq q$:
    \[
        \cspace \catname{R}^{A_{\mathcal{UV}}}_k \simeq X
    \]
\end{proposition}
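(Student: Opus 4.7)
The plan is to observe that under the two-sided witness bound in the hypothesis, the filtration level~$k$ selects exactly the non-empty intersections of cover elements, so that $\catname{R}^{A_{\mathcal{UV}}}_k$ is canonically identified with the face poset of the \v{C}ech nerve $\cechnerve \mathcal{U}$. Once this identification is made, the classical nerve lemma applied to the good cover~$\mathcal{U}$ delivers the weak equivalence to~$X$ almost for free.

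Concretely, I would first unpack $\ob \catname{R}^{A_{\mathcal{UV}}}_k$. By definition $J^{A_{\mathcal{UV}}}_\sigma = \{j \in J \mid U_\sigma \cap V_j \neq \emptyset\}$. If $U_\sigma \neq \emptyset$, the hypothesis gives $\#J^{A_{\mathcal{UV}}}_\sigma \geq q \geq k$, so $\sigma$ is an object of $\catname{R}^{A_{\mathcal{UV}}}_k$; if $U_\sigma = \emptyset$, the hypothesis gives $\#J^{A_{\mathcal{UV}}}_\sigma \leq p < k$, so $\sigma$ is not. (Here I read the stated range as $p < k \leq q$, which matches the corresponding theorem in the introduction and is the only interpretation that makes the exclusion step go through.) Therefore $\ob \catname{R}^{A_{\mathcal{UV}}}_k = \{\sigma \subseteq I \mid U_\sigma \neq \emptyset\}$ with the inclusion order, i.e.\ $\catname{R}^{A_{\mathcal{UV}}}_k$ is literally the face poset of $\cechnerve \mathcal{U}$.

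From here two standard facts close the argument. The classifying space of the face poset of a simplicial complex is homeomorphic to the realization of its barycentric subdivision, so $\cspace \catname{R}^{A_{\mathcal{UV}}}_k \cong |\cechnerve \mathcal{U}|$ up to canonical subdivision. Since $\mathcal{U}$ is a good open cover of~$X$, the nerve lemma yields $|\cechnerve \mathcal{U}| \simeq X$, and the composition proves the claim. One could alternatively stay entirely inside the combinatorial framework of the paper by applying Corollary~\ref{simplicialnerve} to the simplicial cover of $\nerve \catname{R}^{A_{\mathcal{UV}}}_k$ indexed by~$I$ whose member at~$i$ consists of the simplices $\sigma$ with $i \in \sigma$, mirroring the warmup proof of ordinary Dowker duality given just above. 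The whole content is the combinatorial identification of the second paragraph; there is no real obstacle, only the bookkeeping cost of verifying the strict inequality $p<k$ and keeping the two descriptions of $|\cechnerve \mathcal{U}|$ straight.
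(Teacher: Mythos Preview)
Your proposal is correct and follows essentially the same route as the paper: both identify $\catname{R}^{A_{\mathcal{UV}}}_k$ with the face poset of the \v{C}ech nerve $\cechnerve \mathcal{U}$ via the witness-count hypothesis, and then invoke the nerve lemma for the good cover~$\mathcal{U}$. The paper packages the identification through an auxiliary relation $(A_{\mathcal{UV}})_{1,k}$ on $I \times J_k$ (foreshadowing the bifiltration $A_{k,l}$ defined immediately afterward), but the combinatorial content is exactly your second paragraph, and your remark about reading the range as $p<k\leq q$ is well taken.
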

\begin{proof}
   Define $J_k = \{\tau \subseteq J \mid \# \tau \geq k\}$ and a relation 
   \begin{align*}
    (A_{\mathcal{UV}})_{1,k} : & I \times J_k \rightarrow \{0,1\} \\ 
    &(A_{\mathcal{UV}})_{1,k}(i,\tau) = \begin{cases} 1 \text{ if } \forall j \in \tau : U_i\cap V_j \neq \emptyset\\
    0 \text{ else} 
    \end{cases}.
\end{align*} 
Then $\cspace \catname{R}^{A_{\mathcal{UV}}}_k$ is a subdivision of the row complex of this relation, which is precisely the \v{C}ech nerve of~$\mathcal{U}$.  
\end{proof}
The proofs of the above propositions suggests for a general relation~$A \from I \times J \to \{0,1\}$ the following definitions, which allow us to extract the filtrations~$\catname{R}^A_{\bullet}$ and~$\catname{C}^A_{\bullet}$ from appropriate filtrations of relations.
For~$k,l \geq 1$ we may define 
\[
    I_l = \{\sigma \subseteq I \mid \# \sigma \geq l\} \subseteq \mathcal{P}(I)
\]
and 
\[
    J_k = \{\tau \subseteq J \mid \# \tau \geq k \} \subseteq \mathcal{P}(J).
\]
From this we define 
\begin{align*}
    A_{k,l} & \from I_l \times J_k \rightarrow \{0,1\} \\
    & A_{k,l}(\sigma,\tau) = \begin{cases} 1 \quad \text{if} \quad \sigma \subseteq I_{\tau} \Leftrightarrow \tau \subseteq J_{\sigma} \\ 0 \quad \text{else} \end{cases}.
\end{align*}
There is then a functor of poset categories
\begin{align*}
    \psi_k \from & \catname{R}^{A_{k,1}} \rightarrow \catname{R}^A_k \\
    & \chi \mapsto \bigcup_{\sigma \in \chi} \sigma,
\end{align*}
which is well defined because~$\bigcap_{\sigma \in \chi} J_{\sigma} = J_{\bigcup_{\sigma \in \chi} \sigma}$.
\begin{proposition}
    For all~$k \geq 1$ the simplicial map~$\nerve \psi_k$ is a weak homotopy equivalence. 
\end{proposition}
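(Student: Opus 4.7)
The plan is to apply Quillen's Theorem A (Theorem~\ref{QuillenA}) to the poset functor $\psi_k \from \catname{R}^{A_{k,1}} \to \catname{R}^A_k$, which reduces the claim to showing that for every $\sigma' \in \ob \catname{R}^A_k$ the nerve of the fiber $\psi_k / \sigma'$ is contractible.

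First I would unpack what this fiber looks like. An object of $\psi_k / \sigma'$ is a pair $(\chi,\alpha)$ with $\chi \in \ob \catname{R}^{A_{k,1}}$ and $\alpha \from \psi_k(\chi) \to \sigma'$ a morphism in $\catname{R}^A_k$. Since $\catname{R}^A_k$ is a poset, $\alpha$ exists precisely when $\bigcup_{\sigma \in \chi}\sigma \subseteq \sigma'$, which forces every $\sigma \in \chi$ to be a non-empty subset of $\sigma'$. Conversely, any non-empty collection $\chi$ of non-empty subsets of $\sigma'$ automatically lies in $\ob \catname{R}^{A_{k,1}}$: because $J_{\bigcup_{\sigma \in \chi}\sigma} = \bigcap_{\sigma \in \chi} J_\sigma \supseteq J_{\sigma'}$, one has $\# J_{\bigcup_{\sigma \in \chi}\sigma} \geq \# J_{\sigma'} \geq k$. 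Morphisms in $\psi_k/\sigma'$ coincide with the inclusions $\chi \subseteq \chi'$ in $\catname{R}^{A_{k,1}}$, since commutativity with $\alpha$ is automatic in a poset. Hence $\psi_k/\sigma'$ is precisely the poset of non-empty subsets of $\mathcal{P}(\sigma') \setminus \{\emptyset\}$ ordered by inclusion.

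Next I would exhibit a terminal object. The element $\chi_{\top} = \mathcal{P}(\sigma') \setminus \{\emptyset\}$ lies in the fiber, because the union of all its members is $\sigma'$ itself, and $\sigma' \in \ob \catname{R}^A_k$ by assumption. Every other $\chi$ in the fiber is a subset of $\chi_{\top}$, so there is a unique morphism $\chi \to \chi_{\top}$, making $\chi_{\top}$ terminal. The earlier lemma that a category with a terminal object has contractible nerve then yields contractibility of $\nerve(\psi_k / \sigma')$, and Quillen's Theorem A delivers the weak equivalence $\nerve \psi_k$.

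The argument carries little real difficulty beyond the bookkeeping of identifying the fiber. The only mild point of care is verifying that every non-empty subset of $\sigma'$ belongs to $\ob \catname{R}^A_k$, so that the candidate terminal object and all its sub-collections genuinely live in $\psi_k / \sigma'$; this is immediate from the monotonicity $J_{\tau'} \supseteq J_\tau$ whenever $\tau' \subseteq \tau$, applied componentwise.
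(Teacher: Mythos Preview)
Your proof is correct and follows essentially the same approach as the paper: both apply Quillen's Theorem~A after observing that the fiber $\psi_k/\sigma'$ has the (nonempty) powerset of $\sigma'$ as a terminal object. Your version simply spells out the bookkeeping that the paper leaves implicit, in particular the verification that every nonempty collection of nonempty subsets of $\sigma'$ already lies in $\ob\catname{R}^{A_{k,1}}$.
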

\begin{proof}
    Let~$\sigma \in \ob \catname{R}^A_k$. Then the powerset~$\mathcal{P}(\sigma)$ gives us a terminal object for the fiber~$\psi/\sigma$ and Quillens Theorem A applies.  
\end{proof}
Similarly we obtain a weak equivalence~$\nerve \catname{C}^{A_{1,l}} \overset{\sim}{\rightarrow} \nerve \catname{C}^A_l$.
This suggests to study the bifiltration~$\nerve \catname{R}^{A_{\bullet,\bullet}} \simeq \nerve \catname{C}^{A_{\bullet,\bullet}}$. Combining Proposition \ref{proposition:Rec1} and Proposition \ref{proposition:Rec2} we obtain 
\begin{theorem}
    Let~$\mathcal{U} = \{U_i \subseteq X\}_{i \in I}$ be an~$n$-fold and~$m$-good open cover of the topological space~$X$. Let~$\mathcal{V} = \{V_j \subseteq X\}_{j \subseteq J}$ be a collection of subsets and~$A_{\mathcal{UV}} \from I \times J \to \{0,1\}$ a relation where~$A_{\mathcal{UV}}(i,j)=1$ if~$U_i \cap V_j \neq \emptyset$ and zero otherwise. If for every~$\sigma \subseteq I$ we have~$\#J^{A_{\mathcal{UV}}}_{\sigma} \leq p$ if $U_{\sigma} = \emptyset$ and~$\#J^{A_{\mathcal{UV}}}_{\sigma} \geq q$ if~$U_{\sigma} \neq \emptyset$ then for~$p \geq k \geq q$ and~$m \leq l \leq n$:
    \[
        \cspace \catname{R}^{(A_{\mathcal{UV}})_{k,l}} \simeq X
    \]
\end{theorem}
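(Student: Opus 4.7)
The plan is to combine the reductions behind Propositions~\ref{proposition:Rec1} and~\ref{proposition:Rec2} into a single argument. Writing $A = A_{\mathcal{UV}}$, I would factor $\cspace \catname{R}^{A_{k,l}}$ through a poset category of ``geometrically meaningful'' subsets of $I$ and then identify the latter with the face poset of an appropriate \v Cech nerve.

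First I would unwind the definition of $A_{k,l}$ to see that a non-empty $\chi \subseteq I_l$ is an object of $\catname{R}^{A_{k,l}}$ precisely when $\# J_{\bigcup_{\sigma \in \chi}\sigma} \geq k$. This suggests introducing the poset category $\catname{W}_{k,l}$ with objects
\[
    \{\sigma \subseteq I \mid \#\sigma \geq l,\ \# J_\sigma \geq k\},
\]
ordered by inclusion, together with the functor $\psi \from \catname{R}^{A_{k,l}} \to \catname{W}_{k,l}$, $\chi \mapsto \bigcup_{\sigma \in \chi}\sigma$. Exactly as for the map $\psi_k$ in Section~\ref{section:BifiltDow}, the fiber $\psi/\sigma$ has $\{\sigma' \subseteq \sigma \mid \#\sigma' \geq l\}$ as a terminal object, so Quillen's Theorem~A (Theorem~\ref{QuillenA}) yields a weak equivalence $\cspace \catname{R}^{A_{k,l}} \simeq \cspace \catname{W}_{k,l}$.

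Next I would use the witness-count hypothesis to identify $\catname{W}_{k,l}$ with the face poset of a good cover. Combined with the assumed range of $k$, the hypothesis delivers the equivalence $\# J_\sigma \geq k \Leftrightarrow U_\sigma \neq \emptyset$: one direction is the contrapositive of $U_\sigma = \emptyset \Rightarrow \# J_\sigma \leq p$, the other uses $\# J_\sigma \geq q$ whenever $U_\sigma \neq \emptyset$. Consequently $\catname{W}_{k,l}$ is the poset of non-empty intersections of at least $l$ elements of $\mathcal{U}$. For $m \leq l \leq n$ the restricted family $\mathcal{U}_l = \{U_\sigma \mid \sigma \in I_l,\ U_\sigma \neq \emptyset\}$ inherits enough from $\mathcal{U}$ to be itself a good open cover of $X$: $n$-foldness supplies the cover property, $m$-goodness supplies contractibility of intersections. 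A second Quillen-A argument, applied to the functor from the face poset $\catname{C}_l$ of $\cechnerve \mathcal{U}_l$ into $\catname{W}_{k,l}$ sending $\chi$ to $\bigcup \chi$ (again with an evident terminal object in each fiber), produces $\cspace \catname{W}_{k,l} \simeq \cspace \catname{C}_l$. The topological nerve lemma applied to $\mathcal{U}_l$, together with the fact that the classifying space of the face poset of a simplicial complex is a subdivision of its realization, then yields $\cspace \catname{C}_l \simeq |\cechnerve\mathcal{U}_l| \simeq X$. Chaining the above equivalences produces the claim.

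The step I expect to be the main obstacle is the middle one, where the witness-count hypothesis has to be matched precisely to the threshold $k$. One must verify that $k$ is strictly above $p$ so that $\# J_\sigma \geq k$ forces $U_\sigma \neq \emptyset$, and simultaneously at most $q$ so that no genuine intersection with $\#\sigma \geq l$ is excluded from $\catname{W}_{k,l}$. Once this identification is in place, everything else is two routine applications of Quillen's Theorem~A and a single invocation of the nerve lemma.
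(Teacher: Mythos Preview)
Your argument is correct and is essentially what the paper intends: the paper's own proof is the single line ``Combining Proposition~\ref{proposition:Rec1} and Proposition~\ref{proposition:Rec2}'', and you have made that combination explicit using exactly the tools from those proofs and from the $\psi_k$ proposition (the union functor, Quillen's Theorem~A, and the nerve lemma applied to the good cover $\mathcal{U}_l$). Your remark that one really needs $k$ strictly above $p$ for the equivalence $\#J_\sigma\geq k \Leftrightarrow U_\sigma\neq\emptyset$ to hold points to a genuine boundary imprecision in the theorem as stated, not a gap in your reasoning.
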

Even for moderate sized sets~$I$ and~$J$ the relations~$A_{k,l}$ however become extremely large and hard to work with in applications. The following is an attempt to organize the homotopical information in a more manageable format. 
\begin{theorem} \label{theorem:Application}
For every relation~$A \from I \times J \to \{0,1\}$ and natural numbers~$k \geq k'$ and~$l\geq l'$ there are zigzag of weak equivalences that make the diagram
\[
    \begin{tikzcd}
    \nerve \catname{R}^A_{k,l} \arrow[d] \arrow[r,dash,"\sim"] & \diag (\nerve \circ \catname{R}^{A_{1,l}A^{\top}_{k,l}}) \arrow[d] \\
    \nerve \catname{R}^A_{k',l'}  \arrow[r,dash,"\sim"] & \diag (\nerve \circ \catname{R}^{A_{1,l'}A^{\top}_{k',l'}}) 
    \end{tikzcd}
\]
    commute. 
\end{theorem}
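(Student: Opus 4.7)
The plan is to reduce Theorem \ref{theorem:Application} to an application of the functorial extended Dowker duality (Theorem \ref{theorem:FunctorialExtendedDowker}) combined with the $\psi_k$--style weak equivalence $\nerve \catname{R}^{A_{k,1}} \simeq \nerve \catname{R}^A_k$ of the preceding proposition, applied jointly in both indices $k$ and $l$. The core idea is that the single relation $A_{k,l}$, whose row category is combinatorially expensive, can be ``factored'' through the composable pair $(A_{1,l}, A_{k,l}^\top)$ whose extended Dowker category packages the same homotopical content in a much more manageable bisimplicial format.

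First I would construct the vertical map on the right from the morphisms of relations $a \from A_{1,l} \to A_{1,l'}$ and $b \from A_{k,l}^\top \to A_{k',l'}^\top$ induced by the inclusions $I_l \hookleftarrow I_{l'}$ and $J_k \hookleftarrow J_{k'}$ (which exist because $k \geq k'$ and $l \geq l'$ translate to inclusions of the set-systems $J_k \hookrightarrow J_{k'}$ and $I_l \hookrightarrow I_{l'}$). These fit into the functorial framework of Section \ref{section:ExtendedDowker} and, by Theorem \ref{theorem:FunctorialExtendedDowker}, induce a commuting square connecting $\diag(\nerve \circ \catname{R}^{A_{1,l}A_{k,l}^\top})$ and $\diag(\nerve \circ \catname{C}^{A_{k,l}^\top A_{1,l}})$ vertically, with extended Dowker zigzag horizontally.

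Next, the top and bottom horizontal zigzags of the square in Theorem~\ref{theorem:Application} are obtained by identifying, for each fixed $(k,l)$, $\diag(\nerve \circ \catname{C}^{A_{k,l}^\top A_{1,l}})$ with $\nerve \catname{R}^{A_{k,l}}$ up to a further chain of weak equivalences. Unfolding the definitions, an object of the relevant poset category records a pair consisting of a subset $\sigma \in I_l$ together with a common witness set $\tau \in J_k$ with $\sigma \subseteq I_\tau$; this is exactly the data parametrizing simplices of $R(A_{k,l})$ after one collapses the ``choice of witness'' fiber via the same Quillen Theorem~A argument used for $\psi_k$, since for fixed $\sigma$ the poset of admissible $\tau$'s has a terminal object, namely the maximal common witness $J_\sigma \cap J_k$. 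Applying Lemma~\ref{lemma:SimpCatLevelwiseWeakEq} to this degreewise equivalence of bisimplicial sets produces the horizontal weak equivalence on diagonals.

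Finally, for commutativity of the full square, I would check that the $\psi_k$--type collapse is natural with respect to the restriction morphisms $I_l \hookrightarrow I_{l'}$ and $J_k \hookrightarrow J_{k'}$ — which is immediate from the fact that the terminal object is given by an intersection that is monotone in the index sets — and compose this naturality square with the functorial extended Dowker square from the first step. The step I expect to be the main obstacle is the bookkeeping in the middle: carefully matching the nested witness conditions in $\catname{R}^{A_{1,l}A_{k,l}^\top}$ and $\catname{C}^{A_{k,l}^\top A_{1,l}}$ with the simpler row category $\catname{R}^{A_{k,l}}$, and verifying that each identification is sufficiently natural in $(k,l)$ that all the ensuing squares commute on the nose rather than merely up to homotopy. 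This is purely combinatorial but requires care with the poset indexing.
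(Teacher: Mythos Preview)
Your proposal is essentially the same strategy as the paper's proof: both arguments combine a terminal-object contractibility observation (for each simplex of $\catname{R}^{A_{k,l}}$ the attached witness poset has a maximal element, hence contractible nerve), Lemma~\ref{lemma:SimpCatLevelwiseWeakEq} to collapse the resulting degreewise equivalence, and the functorial extended Dowker duality (Theorem~\ref{theorem:FunctorialExtendedDowker}) to pass to the desired $\catname{R}^{A_{1,l}A^{\top}_{k,l}}$ form with naturality in $(k,l)$. The only cosmetic difference is the order: the paper first collapses $\diag(\nerve\circ\catname{R}^{A_{k,l}A^{\top}_{k,1}})$ to $\nerve\catname{R}^{A_{k,l}}$ and then applies extended Dowker, whereas you first apply extended Dowker and then collapse the $\catname{C}$-side; the ingredients and the terminal-object identification are identical.
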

\begin{proof}
    For~$\chi \in \ob \catname{R}^{A_{k,l}}$ we have~$\nerve \catname{C}^{\tilde{A}_{k,1}}_{(\tilde{J}_k)_{\chi}} \simeq \ast$ since~$(\tilde{J}_k)_{\chi}$ is a terminal object. Using Lemma \ref{lemma:SimpCatLevelwiseWeakEq} and Theorem \ref{theorem:FunctorialExtendedDowker} we then get natural weak equivalences
\[
    \nerve \catname{R}^{A_{k,l}} \simeq \diag \nerve \circ\catname{R}^{A_{k,l}A^{\top}_{k,1}} \simeq  \diag (\nerve \circ \catname{C}^{\tilde{A}_{k,1}^{\top}\tilde{A}_{k,l}}) = \diag (\nerve \circ \catname{R}^{\tilde{A}_{k,1}\tilde{A}^{\top}_{k,l}})  
\]
\end{proof}

\section{Outlook}
There is several directions in which to move forward. 
\paragraph{Implementation:} An obvious one is to design and implement an efficient algorithm to compute the persistence modules of the bifiltered Dowker complexes from Section \ref{section:BifiltDow}. Combined with modern software for the exploration of such multi persistence modules like \textit{Rivet} \cite{rivet} this could prove to be invaluable for the unsupervised exploration of large datasets for example in neuroscience. We hope that our Theorem \ref{theorem:Application} can be of use for such an implementation. 

\paragraph{Hypercovers and simplicial presheaves:} The proof of our main result (Theorem \ref{Theorem:main} is based on (a simplicial version of) Segal \cite{Segal}, Dugger and Isaksens \cite{Dugger} result about recovering a spaces homtopy type from any cover. In the latter paper a similar result about hypercovers is proven. It could be fruitful to apply this result in our context of Dokwer complexes. For example, given more than two composeable relations~$A,B,C,\ldots$ one could extend the constructions of section \ref{section:ExtendedDowker} to~$\catname{R}(A,B,C,\ldots)$ and~$\catname{C}(A,B,C,\ldots)$ and ask if a similar result~$\catname{R}(A,B,C,\ldots)\simeq \catname{C}(A,B,C,\ldots)$ holds. It could also be interesting to explore the connection with the homotopy theory of simplcial presheaves as in~\cite{dugger2004hypercovers}.

\paragraph{Hierarchical concept representation:}
Dowker complexes play an important role in the theory of formal concept analysis \cite{ayzenberg2019topology} \cite{freund2015lattice}. Our extension could be interesting in the analysis of hierarchical formal concepts. This provides another motivation for extending our theory further to accommodate more than two composeable relations. It also relates to the analysis of neural data with our methodology. An interesting research question is how hierarchical formal concepts are represented in the activity patterns of deep neural networks. Our constructions could provide a natural framework for studies exploring this.

\backmatter

\vspace{2cm}

\bmhead{Acknowledgments}
We want to thank \v{Z}iga Virk for helpful conversations that motivated our discussion of reconstruction results.  

\newpage

\bibliography{extended_dowker}


\begin{thebibliography}{20}
\ifx \bisbn   \undefined \def \bisbn  #1{ISBN #1}\fi
\ifx \binits  \undefined \def \binits#1{#1}\fi
\ifx \bauthor  \undefined \def \bauthor#1{#1}\fi
\ifx \batitle  \undefined \def \batitle#1{#1}\fi
\ifx \bjtitle  \undefined \def \bjtitle#1{#1}\fi
\ifx \bvolume  \undefined \def \bvolume#1{\textbf{#1}}\fi
\ifx \byear  \undefined \def \byear#1{#1}\fi
\ifx \bissue  \undefined \def \bissue#1{#1}\fi
\ifx \bfpage  \undefined \def \bfpage#1{#1}\fi
\ifx \blpage  \undefined \def \blpage #1{#1}\fi
\ifx \burl  \undefined \def \burl#1{\textsf{#1}}\fi
\ifx \doiurl  \undefined \def \doiurl#1{\url{https://doi.org/#1}}\fi
\ifx \betal  \undefined \def \betal{\textit{et al.}}\fi
\ifx \binstitute  \undefined \def \binstitute#1{#1}\fi
\ifx \binstitutionaled  \undefined \def \binstitutionaled#1{#1}\fi
\ifx \bctitle  \undefined \def \bctitle#1{#1}\fi
\ifx \beditor  \undefined \def \beditor#1{#1}\fi
\ifx \bpublisher  \undefined \def \bpublisher#1{#1}\fi
\ifx \bbtitle  \undefined \def \bbtitle#1{#1}\fi
\ifx \bedition  \undefined \def \bedition#1{#1}\fi
\ifx \bseriesno  \undefined \def \bseriesno#1{#1}\fi
\ifx \blocation  \undefined \def \blocation#1{#1}\fi
\ifx \bsertitle  \undefined \def \bsertitle#1{#1}\fi
\ifx \bsnm \undefined \def \bsnm#1{#1}\fi
\ifx \bsuffix \undefined \def \bsuffix#1{#1}\fi
\ifx \bparticle \undefined \def \bparticle#1{#1}\fi
\ifx \barticle \undefined \def \barticle#1{#1}\fi
\bibcommenthead
\ifx \bconfdate \undefined \def \bconfdate #1{#1}\fi
\ifx \botherref \undefined \def \botherref #1{#1}\fi
\ifx \url \undefined \def \url#1{\textsf{#1}}\fi
\ifx \bchapter \undefined \def \bchapter#1{#1}\fi
\ifx \bbook \undefined \def \bbook#1{#1}\fi
\ifx \bcomment \undefined \def \bcomment#1{#1}\fi
\ifx \oauthor \undefined \def \oauthor#1{#1}\fi
\ifx \citeauthoryear \undefined \def \citeauthoryear#1{#1}\fi
\ifx \endbibitem  \undefined \def \endbibitem {}\fi
\ifx \bconflocation  \undefined \def \bconflocation#1{#1}\fi
\ifx \arxivurl  \undefined \def \arxivurl#1{\textsf{#1}}\fi
\csname PreBibitemsHook\endcsname

\bibitem[\protect\citeauthoryear{Dowker}{1952}]{dowker1952homology}
\begin{botherref}
\oauthor{\bsnm{Dowker}, \binits{C.H.}}:
Homology groups of relations.
Annals of mathematics,
84--95
(1952)
\end{botherref}
\endbibitem

\bibitem[\protect\citeauthoryear{Bj{\"o}rner}{1995}]{bjorner1995topological}
\begin{barticle}
\bauthor{\bsnm{Bj{\"o}rner}, \binits{A.}}:
\batitle{Topological methods}.
\bjtitle{Handbook of combinatorics}
\bvolume{2},
\bfpage{1819}--\blpage{1872}
(\byear{1995})
\end{barticle}
\endbibitem

\bibitem[\protect\citeauthoryear{Chowdhury and M{\'e}moli}{2018}]{chowdhury2018functorial}
\begin{barticle}
\bauthor{\bsnm{Chowdhury}, \binits{S.}},
\bauthor{\bsnm{M{\'e}moli}, \binits{F.}}:
\batitle{A functorial dowker theorem and persistent homology of asymmetric networks}.
\bjtitle{Journal of Applied and Computational Topology}
\bvolume{2},
\bfpage{115}--\blpage{175}
(\byear{2018})
\end{barticle}
\endbibitem

\bibitem[\protect\citeauthoryear{Virk}{2021}]{Virk}
\begin{barticle}
\bauthor{\bsnm{Virk}, \binits{{\v{Z}}.}}:
\batitle{Rips complexes as nerves and a functorial dowker-nerve diagram}.
\bjtitle{Mediterranean Journal of Mathematics}
\bvolume{18}(\bissue{2}),
\bfpage{1}--\blpage{24}
(\byear{2021})
\end{barticle}
\endbibitem

\bibitem[\protect\citeauthoryear{Robinson}{2022}]{robinson2022cosheaf}
\begin{barticle}
\bauthor{\bsnm{Robinson}, \binits{M.}}:
\batitle{Cosheaf representations of relations and dowker complexes}.
\bjtitle{Journal of Applied and Computational Topology}
\bvolume{6}(\bissue{1}),
\bfpage{27}--\blpage{63}
(\byear{2022})
\end{barticle}
\endbibitem

\bibitem[\protect\citeauthoryear{Segal}{1968}]{Segal}
\begin{barticle}
\bauthor{\bsnm{Segal}, \binits{G.}}:
\batitle{Classifying spaces and spectral sequences}.
\bjtitle{Publications Math{\'e}matiques de l'IH{\'E}S}
\bvolume{34},
\bfpage{105}--\blpage{112}
(\byear{1968})
\end{barticle}
\endbibitem

\bibitem[\protect\citeauthoryear{Dugger and Isaksen}{2004}]{Dugger}
\begin{barticle}
\bauthor{\bsnm{Dugger}, \binits{D.}},
\bauthor{\bsnm{Isaksen}, \binits{D.C.}}:
\batitle{Topological hypercovers and 1-realizations}.
\bjtitle{Mathematische Zeitschrift}
\bvolume{246}(\bissue{4}),
\bfpage{667}--\blpage{689}
(\byear{2004})
\end{barticle}
\endbibitem

\bibitem[\protect\citeauthoryear{Curto and Itskov}{2008}]{curto2008cell}
\begin{barticle}
\bauthor{\bsnm{Curto}, \binits{C.}},
\bauthor{\bsnm{Itskov}, \binits{V.}}:
\batitle{Cell groups reveal structure of stimulus space}.
\bjtitle{PLoS computational biology}
\bvolume{4}(\bissue{10}),
\bfpage{1000205}
(\byear{2008})
\end{barticle}
\endbibitem

\bibitem[\protect\citeauthoryear{Singh et~al.}{2008}]{singh2008topological}
\begin{barticle}
\bauthor{\bsnm{Singh}, \binits{G.}},
\bauthor{\bsnm{Memoli}, \binits{F.}},
\bauthor{\bsnm{Ishkhanov}, \binits{T.}},
\bauthor{\bsnm{Sapiro}, \binits{G.}},
\bauthor{\bsnm{Carlsson}, \binits{G.}},
\bauthor{\bsnm{Ringach}, \binits{D.L.}}:
\batitle{Topological analysis of population activity in visual cortex}.
\bjtitle{Journal of vision}
\bvolume{8}(\bissue{8}),
\bfpage{11}--\blpage{11}
(\byear{2008})
\end{barticle}
\endbibitem

\bibitem[\protect\citeauthoryear{Rybakken et~al.}{2019}]{rybakken2019decoding}
\begin{barticle}
\bauthor{\bsnm{Rybakken}, \binits{E.}},
\bauthor{\bsnm{Baas}, \binits{N.}},
\bauthor{\bsnm{Dunn}, \binits{B.}}:
\batitle{Decoding of neural data using cohomological feature extraction}.
\bjtitle{Neural computation}
\bvolume{31}(\bissue{1}),
\bfpage{68}--\blpage{93}
(\byear{2019})
\end{barticle}
\endbibitem

\bibitem[\protect\citeauthoryear{Gardner et~al.}{2022}]{gardner2022toroidal}
\begin{barticle}
\bauthor{\bsnm{Gardner}, \binits{R.J.}},
\bauthor{\bsnm{Hermansen}, \binits{E.}},
\bauthor{\bsnm{Pachitariu}, \binits{M.}},
\bauthor{\bsnm{Burak}, \binits{Y.}},
\bauthor{\bsnm{Baas}, \binits{N.A.}},
\bauthor{\bsnm{Dunn}, \binits{B.A.}},
\bauthor{\bsnm{Moser}, \binits{M.-B.}},
\bauthor{\bsnm{Moser}, \binits{E.I.}}:
\batitle{Toroidal topology of population activity in grid cells}.
\bjtitle{Nature}
\bvolume{602}(\bissue{7895}),
\bfpage{123}--\blpage{128}
(\byear{2022})
\end{barticle}
\endbibitem

\bibitem[\protect\citeauthoryear{Vaupel et~al.}{2023}]{TDAneuro}
\begin{botherref}
\oauthor{\bsnm{Vaupel}, \binits{M.}},
\oauthor{\bsnm{Erik}, \binits{H.}},
\oauthor{\bsnm{Dunn}, \binits{B.}}:
A topological perspective on the dual nature of the correlation structure and the neural state space.
bioRxiv preprint
(2023)
\end{botherref}
\endbibitem

\bibitem[\protect\citeauthoryear{Friedman}{2008}]{friedman2008elementary}
\begin{botherref}
\oauthor{\bsnm{Friedman}, \binits{G.}}:
An elementary illustrated introduction to simplicial sets.
arXiv preprint arXiv:0809.4221
(2008)
\end{botherref}
\endbibitem

\bibitem[\protect\citeauthoryear{Goerss and Jardine}{2009}]{goerss2009simplicial}
\begin{bbook}
\bauthor{\bsnm{Goerss}, \binits{P.G.}},
\bauthor{\bsnm{Jardine}, \binits{J.F.}}:
\bbtitle{Simplicial Homotopy Theory}.
\bpublisher{Springer},
\blocation{Berlin}
(\byear{2009})
\end{bbook}
\endbibitem

\bibitem[\protect\citeauthoryear{Quillen}{1973}]{Quillen}
\begin{bchapter}
\bauthor{\bsnm{Quillen}, \binits{D.}}:
\bctitle{Higher algebraic k-theory: I}.
In: \bbtitle{Higher K-theories},
pp. \bfpage{85}--\blpage{147}.
\bpublisher{Springer},
\blocation{Berlin}
(\byear{1973})
\end{bchapter}
\endbibitem

\bibitem[\protect\citeauthoryear{Waldhausen}{1982}]{Waldhausen}
\begin{bchapter}
\bauthor{\bsnm{Waldhausen}, \binits{F.}}:
\bctitle{Algebraic k-theory of spaces, a manifold approach}.
In: \bbtitle{Current Trends in Algebraic Topology, Part 1},
pp. \bfpage{141}--\blpage{184}
(\byear{1982})
\end{bchapter}
\endbibitem

\bibitem[\protect\citeauthoryear{Segal}{1974}]{segal1974categories}
\begin{barticle}
\bauthor{\bsnm{Segal}, \binits{G.}}:
\batitle{Categories and cohomology theories}.
\bjtitle{Topology}
\bvolume{13}(\bissue{3}),
\bfpage{293}--\blpage{312}
(\byear{1974})
\end{barticle}
\endbibitem

\bibitem[\protect\citeauthoryear{{The RIVET Developers}}{2020}]{rivet}
\begin{botherref}
\oauthor{\bsnm{{The RIVET Developers}}}:
RIVET.
\url{https://github.com/rivetTDA/rivet/}
\end{botherref}
\endbibitem

\bibitem[\protect\citeauthoryear{Dugger et~al.}{2004}]{dugger2004hypercovers}
\begin{bchapter}
\bauthor{\bsnm{Dugger}, \binits{D.}},
\bauthor{\bsnm{Hollander}, \binits{S.}},
\bauthor{\bsnm{Isaksen}, \binits{D.C.}}:
\bctitle{Hypercovers and simplicial presheaves}.
In: \bbtitle{Mathematical Proceedings of the Cambridge Philosophical Society},
vol. \bseriesno{136},
pp. \bfpage{9}--\blpage{51}
(\byear{2004}).
\bcomment{Cambridge University Press}
\end{bchapter}
\endbibitem

\bibitem[\protect\citeauthoryear{Freund et~al.}{2015}]{freund2015lattice}
\begin{barticle}
\bauthor{\bsnm{Freund}, \binits{A.}},
\bauthor{\bsnm{Andreatta}, \binits{M.}},
\bauthor{\bsnm{Giavitto}, \binits{J.-L.}}:
\batitle{Lattice-based and topological representations of binary relations with an application to music}.
\bjtitle{Annals of Mathematics and Artificial Intelligence}
\bvolume{73},
\bfpage{311}--\blpage{334}
(\byear{2015})
\end{barticle}
\endbibitem

\end{thebibliography}

\end{document}